\theoremstyle{plain} 
\newtheorem{theorem}{Theorem}[section]
\newtheorem{lemma}[theorem]{Lemma}
\newtheorem{proposition}[theorem]{Proposition}
\numberwithin{equation}{section}
\newcommand{\N}{\mathbb{N}}
\newcommand{\Z}{\mathbb{Z}}
\newcommand{\Q}{\mathbb{Q}}
\newcommand{\coloneqq}{\mathrel{\mathop:}=}
\DeclareMathOperator{\Aut}{Aut} 
\DeclareMathOperator{\Span}{span}
\DeclareMathOperator{\img}{img}
\DeclareMathOperator{\Mat}{Mat}
\DeclareMathOperator{\GL}{GL}
\DeclareMathOperator{\End}{End}
\renewcommand{\rho}{\varrho} 
\renewcommand{\theta}{\vartheta}
\begin{document}

\title[A nilpotent group without functional equations]{A nilpotent
  group without local functional equations for pro-isomorphic
  subgroups}

\author{Mark N.\ Berman}\address{Department of Mathematics, Ort Braude
  College, P.O. Box 78, Snunit St., 51, Karmiel 2161002, Israel}
\email{mark.n.berman@gmail.com}

\author{Benjamin Klopsch} \address{Mathematisches Institut,
  Heinrich-Heine-Universit\"at D\"usseldorf, 40225 D\"usseldorf, Germany}
\email{klopsch@math.uni-duesseldorf.de}

\keywords{Nilpotent group, pro-isomorphic zeta function, functional
  equation.}

\subjclass[2010]{Primary 11M41; Secondary 20E07, 20F18, 20F69}

\begin{abstract}
  The pro-isomorphic zeta function $\zeta_\Gamma^\wedge(s)$ of a
  torsion-free finitely generated nilpotent group $\Gamma$ enumerates
  finite index subgroups $\Delta \leq \Gamma$ such that $\Delta$ and
  $\Gamma$ have isomorphic profinite completions.  It admits an Euler
  product decomposition $\zeta_\Gamma^\wedge(s) = \prod_p
  \zeta_{\Gamma,p}^\wedge(s)$.

  We manufacture the first example of a torsion-free finitely
  generated nilpotent group $\Gamma$ such that the local Euler factors
  $\zeta_{\Gamma,p}^\wedge(s)$ do not satisfy functional equations.
  The group $\Gamma$ has nilpotency class $4$ and Hirsch length~$25$.
  It is obtained, via the Malcev correspondence, from a $\Z$-Lie
  lattice $\Lambda$ with a suitable algebraic automorphism group
  $\mathbf{Aut}(\Lambda)$.
\end{abstract}

\maketitle

%%%%%

\section{Introduction}

Let $\Gamma$ be a torsion-free finitely generated nilpotent group.  In
analogy to classical zeta functions such as the Dedekind zeta function
of a number field, Grunewald, Segal and Smith introduced
in~\cite{GrSeSm88} zeta functions counting certain finite index
subgroups of $\Gamma$.  More precisely, they defined Dirichlet
generating functions
\begin{align*}
  \zeta_\Gamma^\leq(s) & = \sum_{n=1}^\infty
  \frac{a_n^\leq(\Gamma)}{n^s}, && \zeta_\Gamma^\trianglelefteq(s) =
  \sum_{n=1}^\infty
  \frac{a_n^\trianglelefteq(\Gamma)}{n^s}, \\
  \zeta_\Gamma^{\mathrm{iso}}(s) & = \sum_{n=1}^\infty
  \frac{a_n^{\mathrm{iso}}(\Gamma)}{n^s}, && \zeta_\Gamma^\wedge(s) =
  \sum_{n=1}^\infty \frac{a_n^\wedge(\Gamma)}{n^s},
\end{align*}
where $a_n^\leq(\Gamma)$, $a_n^\trianglelefteq(\Gamma)$,
$a_n^\mathrm{iso}(\Gamma)$ and $a_n^\wedge(\Gamma)$ denote the number
of subgroups $\Delta$ of index $n$ in $\Gamma$ satisfying $\Delta \leq
\Gamma$, $\Delta \trianglelefteq \Gamma$, $\Delta \cong \Gamma$ and
$\widehat{\Delta} \cong \widehat{\Gamma}$ respectively.  Here
$\widehat{H}$ denotes the profinite completion of a group~$H$.  The
Dirichlet series above are now commonly referred to as the
\emph{subgroup zeta function}, the \emph{normal zeta function}, the
\emph{isomorphic zeta function} and the \emph{pro-isomorphic zeta
  function}.

The group $\Gamma$ being nilpotent, one easily derives Euler product
decompositions
\begin{equation*} \zeta_\Gamma^\leq(s) =
  \prod_p \zeta_{\Gamma,p}^\leq(s), \quad
  \zeta_\Gamma^\trianglelefteq(s) = \prod_p
  \zeta_{\Gamma,p}^\trianglelefteq(s), \quad \zeta_\Gamma^\wedge(s) =
  \prod_p \zeta_{\Gamma,p}^\wedge(s),
\end{equation*}
where each product extends over all rational primes $p$ and
\begin{equation} \label{equ:local-factor} \zeta_{\Gamma,p}^*(s) =
  \sum_{k=0}^\infty a_{p^k}^*(\Gamma) p^{-ks}, \qquad \text{for $*$
    one of $\leq$, $\trianglelefteq$, $\wedge$},
\end{equation}
is the \emph{local zeta function} at a prime~$p$.  In contrast, the
zeta function $\zeta_\Gamma^{\mathrm{iso}}(s)$ generally does not
admit a decomposition of this kind.  One of the key results
in~\cite{GrSeSm88} is that each of the local factors defined
in~\eqref{equ:local-factor} is in fact a rational function over $\Q$
in $p^{-s}$.

Over the last $20$ years many important advances have been made in the
study of zeta functions of nilpotent groups; for instance, see
\cite{dSGr00,dSWo08,Vo10}.  One of the prominent questions driving the
subject has been whether the local zeta functions satisfy functional
equations: is it true that
\[
\zeta_{\Gamma,p}^*(s) \vert_{p \rightarrow p^{-1}} = (-1)^a p^{b-cs}
\zeta_{\Gamma,p}(s) \qquad \text{for almost all $p$},
\]
where $a,b,c$ are certain integer parameters depending on $\Gamma$?
In~\cite{Vo10}, Voll derived positive answers for subgroup zeta
functions in general and for normal zeta functions associated to
groups of nilpotency class at most~$2$.  More precisely, he showed
that the local zeta functions in question can be expressed as rational
functions in $p^{-s}$ whose coefficients involve the numbers $b_V(p)$
of $\mathbb{F}_p$-rational points of certain smooth projective
varieties~$V$.  The number $b_V(p^{-1})$ is obtained by writing
$b_V(p)$ as an alternating sum of Frobenius eigenvalues and
subsequently inverting these eigenvalues.  It is known that normal
zeta functions of nilpotent groups of class $3$ may or may not satisfy
local functional equations as above; see~\cite[Section~2.11]{dSWo08}.

In comparison to $\zeta_{\Gamma,p}^\leq(s)$ and
$\zeta_{\Gamma,p}^\trianglelefteq(s)$, our present picture of the
local pro-iso\-mor\-phic zeta function $\zeta_{\Gamma,p}^\wedge(s)$ is
somewhat less complete.  This is perhaps due to the fact that
$\zeta_{\Gamma,p}^\wedge(s)$ depends crucially on an intermediate
object that is generally not easy to pin down, namely the algebraic
automorphism group $\mathbf{Aut}(\Lambda)$ of a $\Z$-Lie lattice
$\Lambda$ naturally associated to~$\Gamma$.  Using
$\mathbf{Aut}(\Lambda)$, one can compute $\zeta_{\Gamma,p}^\wedge(s)$
as a $p$-adic integral similar to zeta functions of reductive
algebraic groups that were studied in the 1960s by Weil, Tamagawa,
Satake and Macdonald; cf.\ Proposition~\ref{pro:integral-formula}.
Building upon work of Igusa~\cite{Ig89}, du Sautoy and
Lubotzky~\cite{dSLu96} and Berman~\cite{Be11} have established
functional equations for $\zeta_{\Gamma,p}^\wedge(s)$ subject to
certain conditions on $\mathbf{Aut}(\Lambda)$.
 
The purpose of this paper is to give the first example of a nilpotent
group $\Gamma$ such that the local pro-isomorphic zeta functions
$\zeta_{\Gamma,p}(s)$ do not satisfy functional equations in the sense
discussed above.

\begin{theorem} \label{thm:main-thm} There exists a torsion-free
  finitely generated nilpotent group $\Gamma$, of nilpotency class~$4$
  and Hirsch length $25$, such that, for all primes $p>3$,
  \[
  \zeta_{\Gamma,p}^\wedge(s) = \frac{1 + p^{285-102s} + 2p^{286-102s}
    + 2p^{572-204s}}{(1 - p^{285-102s})(1 - p^{573-204s})}.
  \]
\end{theorem}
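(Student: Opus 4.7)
The plan is to exhibit $\Gamma$ via the Malcev correspondence from a carefully chosen $\Z$-Lie lattice $\Lambda$ of $\Z$-rank $25$ and nilpotency class $4$, and then to evaluate the $p$-adic integral representation of $\zeta_{\Gamma,p}^\wedge(s)$ afforded by Proposition~\ref{pro:integral-formula} in closed form. The critical design choice is that $\mathbf{Aut}(\Lambda)$ be rich enough to produce a numerator with a non-palindromic pattern of exponents, while remaining explicit enough for the integral to be evaluated by hand. In each of the previous treatments~\cite{Ig89,dSLu96,Be11}, sufficient reductive symmetry was present in $\mathbf{Aut}(\Lambda)$ to force a local functional equation; the work here is to break that symmetry at the source.

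After writing down a basis and structure constants for $\Lambda$ adapted to its lower central series, I would compute $\mathbf{Aut}(\Lambda)$ as the affine $\Z$-group scheme cut out by the equations of bracket-preservation. Over $\Z_p$ for $p>3$, I expect $\mathbf{Aut}(\Lambda)$ to factor as a semidirect product of a small torus $\mathbf{T}$ with a unipotent piece $\mathbf{U}$ that mixes basis elements across the graded layers; the hypothesis $p>3$ absorbs the customary small-prime issues in the Malcev formulas and in the scheme structure. With this description in hand, Proposition~\ref{pro:integral-formula} presents $\zeta_{\Gamma,p}^\wedge(s)$ as an integral that factors along an Iwahori-like decomposition of $\mathbf{Aut}(\Lambda)(\Z_p)$. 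The torus part should contribute the two geometric denominators $(1-p^{285-102s})$ and $(1-p^{573-204s})$, whose exponents record the weights of the determinant character on $\Lambda$ against the fundamental cocharacters of $\mathbf{T}$; the unipotent part should contribute the numerator $1 + p^{285-102s} + 2p^{286-102s} + 2p^{572-204s}$, with the coefficients $2$ arising from the sizes of certain orbits of $\mathbf{T}(\mathbb{F}_p)$ on reductions of $\mathbf{U}(\Z_p)$ modulo $p$.

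The main obstacle is the construction of $\Lambda$ itself: the structure constants must be tuned so that no unexpected automorphisms reappear from hidden symmetries, yet the genuine $\mathbf{Aut}(\Lambda)$ still has a unipotent radical whose orbit structure yields the asymmetric exponent pattern $(0, 285, 286, 572)$ in the numerator. Once the formula is established, failure of a local functional equation is elementary. Adjusting for the transformation of the denominators under $p\mapsto p^{-1}$ by the compensating factor $p^{858-306s}$, the substitution converts the numerator into $2p^{286-102s} + 2p^{572-204s} + p^{573-204s} + p^{858-306s}$. The gap sequences $(285,1,286)$ and $(286,1,285)$ among the $p$-exponents of the original and transformed numerators are reverses of one another, and so cannot be brought into coincidence by any monomial shift; hence no equation of the form $\zeta_{\Gamma,p}^\wedge(s)\vert_{p\to p^{-1}} = \pm p^{b-cs}\zeta_{\Gamma,p}^\wedge(s)$ can hold.
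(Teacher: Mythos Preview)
Your proposal correctly identifies the overall architecture: pass to a $\Z$-Lie lattice $\Lambda$ via the Malcev correspondence, pin down $\mathbf{Aut}(\Lambda)$ as a torus-by-unipotent group, and evaluate the integral of Proposition~\ref{pro:integral-formula}. But what you have written is a plan rather than a proof, and the substance of Theorem~\ref{thm:main-thm} lies entirely in the parts you leave unexecuted. You explicitly flag ``the construction of $\Lambda$ itself'' as the main obstacle and then do not construct it. Without an explicit $\Lambda$ there is nothing to compute; without a rigorous determination of $\mathbf{Aut}(\Lambda)$---in particular, ruling out \emph{extra} automorphisms, which is the delicate direction and occupies the bulk of the paper's Section~\ref{sec:aut-comp}---one cannot justify the shape of the integral; and without verifying the lifting condition \cite[Assumption~2.3]{dSLu96} one cannot invoke the du~Sautoy--Lubotzky reduction (Theorem~\ref{thm:dS-Lu}) at all.

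Your speculation about the mechanism behind the numerator is also off. In the actual computation the unipotent radical is absorbed into correction factors $\theta_i(h)$ on the torus integral via Theorem~\ref{thm:dS-Lu}, and the asymmetric numerator arises not from $\mathbf{T}(\mathbb{F}_p)$-orbits on a unipotent reduction but from two features of the resulting sum over the character lattice of the torus: the defining relation $a^3 = bc$ forces a congruence $3 \mid (i+j)$ on valuation pairs, and the factor $\theta_1$ contributes a term $\min\{\lvert b\rvert_p^{-1}, \lvert c\rvert_p^{-1}\}$. It is the interplay of this $\min$ with the congruence condition that produces the coefficients $1,1,2,2$ attached to the non-palindromic exponent pattern. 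Your closing paragraph on the failure of the functional equation is fine as a postscript, but it presupposes the explicit formula whose derivation is the entire content of the theorem.
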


In particular, this resolves Question~1.3 in~\cite{Be11}.  As
explained in Section~\ref{sec:Malcev}, the proof of
Theorem~\ref{thm:main-thm} reduces to the construction of a suitable
nilpotent $\Z$-Lie lattice~$\Lambda$, linked to a group $\Gamma$ via
the Malcev correspondence.  In Section~\ref{sec:descr.of.L} we
describe a candidate for such a Lie lattice~$\Lambda$, in terms of
generators and relations: $\Lambda$ has nilpotency class~$4$ and
$\Z$-rank~$25$; its construction is motivated by certain integrals
presented in \cite[Section~6]{Be11}.  In Section~\ref{sec:aut-comp} we
carry out the task of pinning down the algebraic automorphism group
$\mathbf{Aut}(\Lambda)$.  In Section~\ref{sec:zeta-comp} we compute
$\zeta_{\Gamma,p}^\wedge(s)$, using our description of
$\mathbf{Aut}(\Lambda)$ and the machinery developed in~\cite{dSLu96}.

%%%%%

\section{Reduction to $\Z$-Lie lattices} \label{sec:Malcev}

Let $\Gamma$ be a torsion-free finitely generated nilpotent group.
Then the profinite completion $\widehat{\Gamma} \cong \prod_p
\widehat{\Gamma}_p$ is pro-nilpotent, with Sylow pro-$p$ subgroups
isomorphic to the pro-$p$ completions $\widehat{\Gamma}_p$, and
\[
\zeta_\Gamma^\wedge(s) = \zeta_{\widehat{\Gamma}}^{\mathrm{iso}}(s) =
\prod_{p} \zeta_{\widehat{\Gamma}_p}^{\mathrm{iso}}(s).
\]
One of the key steps in~\cite{GrSeSm88} is to use the Malcev
correspondence to `linearise' the problem of computing the factors
$\zeta_{\widehat{\Gamma}_p}^{\mathrm{iso}}(s)$ by passing from groups
to Lie lattices.

Let $\Lambda$ be a $\Z$-Lie lattice.  In analogy to the zeta functions
defined for groups, the \emph{isomorphic zeta function} and the
\emph{pro-isomorphic zeta function} of $\Lambda$ are
\[
\zeta_\Lambda^{\mathrm{iso}} (s) = \sum_{n=1}^\infty
\frac{a_n^{\mathrm{iso}}(\Lambda)}{n^s}, \qquad \zeta_\Lambda^\wedge
(s) = \sum_{n=1}^\infty \frac{a_n^\wedge(\Lambda)}{n^s},
\]
where $a_n^{\mathrm{iso}}(\Lambda)$ and $a_n^\wedge(\Lambda)$ denote
the number of Lie sublattices $M$ of index $n$ in $\Lambda$ satisfying
$M \cong \Lambda$ and $\widehat{\Z} \otimes_\Z M \cong \widehat{\Z}
\otimes_\Z \Lambda$ respectively.  As $\widehat{\Z} \cong \prod_p
\Z_p$, where $\Z_p$ denotes the ring of $p$-adic integers, the latter
condition is equivalent to: $\Z_p \otimes_\Z M \cong \Z_p \otimes_\Z
\Lambda$ for all primes~$p$.  Moreover,
\[
\zeta_\Lambda^\wedge(s) = \zeta_{\widehat{\Z} \otimes_\Z
  \Lambda}^{\mathrm{iso}}(s) = \prod_{p} \zeta_{\Z_p \otimes_\Z
  \Lambda}^{\mathrm{iso}}(s).
\]

In~\cite[Section~4]{GrSeSm88} one finds a discussion of how starting
from a group $\Gamma$ one obtains a $\Z$-Lie lattice $\Lambda$ such
that $\zeta_{\widehat{\Gamma}_p}^{\mathrm{iso}}(s) = \zeta_{\Z_p
  \otimes_\Z \Lambda}^{\mathrm{iso}}(s)$ for almost all primes~$p$.
Our aim in this section is to complement the treatment
in~\cite{GrSeSm88}, by giving a detailed account of the transition in
the opposite direction from $\Z$-Lie lattices to groups; refer
to~\cite[Chapter~6]{Se83} for an alternative approach
and~\cite[Chapter~10]{Kh98} for a description of the related Malcev
correspondence.  We make use of the Lie correspondence between
$p$-adic analytic pro-$p$ groups and $\Z_p$-Lie lattices, which --
similarly to the Malcev correspondence -- is effected by the Hausdorff
series
\[
\Phi(X,Y) = \log (\exp(X) \exp(Y)) \in \Q \langle\!\langle X,Y
\rangle\!\rangle,
\]
a formal power series in non-commuting variables $X,Y$, where
\[
\exp(Z) = \sum_{n=0}^\infty \frac{Z^n}{n!} \qquad \text{and} \qquad
\log(Z) = \sum_{n=1}^\infty (-1)^{n+1} \frac{(Z-1)^n}{n}.
\]
By arranging terms suitably, one can write $\Phi(X,Y)$ as a sum
\begin{equation} \label{equ:Hausdorff-series} \Phi(X,Y) =
  \sum_{n=1}^\infty u_n(X,Y)
\end{equation}
of homogeneous Lie polynomials $u_n(X,Y)$ in $X,Y$ of total degree $n$
with rational coefficients.  For instance, modulo terms of total
degree at least $5$, the Hausdorff series is congruent to the Lie
polynomial
\[
\Phi_4(X,Y) = X + Y + \tfrac{1}{2} [X,Y] - \tfrac{1}{12} [X,Y,X] +
\tfrac{1}{12} [X,Y,Y] - \tfrac{1}{24} [X,Y,X,Y].
\]
For $c \in \N$, let $m(c)$ denote the least common denominator of the
rational coefficients appearing in \eqref{equ:Hausdorff-series} up to
total degree~$c$; for instance, $m(4) = 24$.

Suppose that $\Lambda$ is a finitely generated nilpotent $\Z$-Lie
lattice of class $c$, and let $\Phi_c(X,Y)$ denote the Lie polynomial
obtained from $\Phi(X,Y)$ by truncating after terms of total degree at
most~$c$.  Setting $m = m(c)$, we have $[m\Lambda,m\Lambda] \subseteq
m (m\Lambda)$.  Consequently,
\[
\mathsf{exp}(m\Lambda) \coloneqq (m\Lambda,*), \qquad \text{where $x*y
  = \Phi_c(x,y)$,}
\]
defines a torsion-free finitely generated nilpotent group of
class~$c$.  Indeed, the formal identities
\begin{align*}
  \Phi(\Phi(X,Y),Z) & = \Phi(X,\Phi(Y,Z)), \\ \Phi(0,X) = \Phi(X,0) = X,
         \quad & \Phi(-X,X) = \Phi(X,-X) = 0
\end{align*}
show that $\mathsf{exp}(m\Lambda)$ is a group.  Furthermore, the
identity $\Phi(X,X') = X + X'$ for commuting variables $X,X'$ shows
that on every abelian Lie sublattice $A$ of $m\Lambda$ the operation
$*$ is the same as Lie addition; in particular,
$\mathsf{exp}(m\Lambda)$ is torsion-free, and central isolated Lie
sublattices correspond to central isolated subgroups.  By induction on
the nilpotency class~$c$, one shows that $\mathsf{exp}(m\Lambda)$ is
finitely generated nilpotent of class at most $c$ and has Hirsch
length equal to the $\Z$-rank of $m\Lambda$.  The nilpotency class of
$\mathsf{exp}(m\Lambda)$ is equal to $c$, because each group
commutator $[x_1, \ldots, x_c]_\mathrm{grp}$ of length $c$ yields
exactly the same element as the corresponding Lie commutator $[x_1,
\ldots, x_c]_\mathrm{Lie}$; this follows, by induction, from the
congruence $\Phi(\Phi(-X,-Y),\Phi(X,Y)) \equiv [X,Y]$ modulo
commutators of length at least~$3$.  This establishes the first half
of the following proposition.
 
\begin{proposition} \label{pro:Malcev-corr}
  Let $\Lambda$ be a nilpotent $\Z$-Lie lattice of class $c$, and set
  $m = m(c)$.  Then $\Gamma = \mathsf{exp}(m \Lambda)$ is a
  torsion-free finitely generated nilpotent group of class~$c$, and
  for all primes $p$ with $p \nmid m$,
  \[
  \zeta_{\widehat{\Gamma}_p}^{\mathrm{iso}}(s) = \zeta_{\Z_p
    \otimes_\Z \Lambda}^{\mathrm{iso}}(s).
  \]
\end{proposition}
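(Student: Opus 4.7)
The first half of the proposition has been established in the preceding discussion, so the plan is to concentrate on the equality $\zeta_{\widehat{\Gamma}_p}^{\mathrm{iso}}(s) = \zeta_{\Z_p \otimes_\Z \Lambda}^{\mathrm{iso}}(s)$ for a prime $p \nmid m$. The strategy is to realise $\widehat{\Gamma}_p$ and $\Lambda_p \coloneqq \Z_p \otimes_\Z \Lambda$ on a common underlying $\Z_p$-module and then to check that sub-Lie-lattices and open subgroups of finite index agree as subsets, with matching indices and matching isomorphism classes.

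First I would identify the ambient object. Because $p \nmid m$, the scalar $m$ is a unit in $\Z_p$, so $\Z_p \otimes_\Z (m\Lambda)$ may be identified with $\Lambda_p$. Every coefficient of the truncated Hausdorff polynomial $\Phi_c(X,Y)$ has denominator dividing $m$, hence lies in $\Z_p$, so the operation $x \ast y = \Phi_c(x,y)$ extends from $m\Lambda$ to the whole of $\Lambda_p$ and endows it with the structure of a torsion-free nilpotent pro-$p$ group, which I shall denote $\mathsf{exp}(\Lambda_p)$, carrying its $p$-adic topology. The inclusion $\Gamma = \mathsf{exp}(m\Lambda) \hookrightarrow \mathsf{exp}(\Lambda_p)$ is a group homomorphism with dense image into a finitely generated pro-$p$ group, and by the universal property of the pro-$p$ completion it induces a canonical isomorphism $\widehat{\Gamma}_p \cong \mathsf{exp}(\Lambda_p)$.

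Next I would show that the open subgroups of $\mathsf{exp}(\Lambda_p)$ of finite index coincide, as subsets of $\Lambda_p$, with the $\Z_p$-Lie sublattices of $\Lambda_p$ of finite index. One direction is direct: since $\Phi_c$ has $\Z_p$-integral coefficients, any $\Z_p$-submodule of $\Lambda_p$ closed under $[\cdot,\cdot]$ is automatically closed under $\ast$ and under the inversion $x \mapsto -x$. For the reverse I would exploit the mutual expressibility of the two structures: the congruence $\Phi(\Phi(-X,-Y),\Phi(X,Y)) \equiv [X,Y]$ modulo higher Lie commutators, already invoked above, can be iterated -- by induction on nilpotency class -- to write $[\cdot,\cdot]$, the Lie addition $+$ and $\Z_p$-scalar multiplication as $\Z_p$-polynomial expressions in $\ast$. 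Indices computed with respect to $(\Lambda_p,+)$ and $(\mathsf{exp}(\Lambda_p),\ast)$ then agree automatically, since both are orders of the common quotient set.

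The same polynomial identities, applied to maps between two such sub-objects, show that a $\Z_p$-linear bijection is a Lie-lattice isomorphism if and only if it is a group isomorphism. Hence $a_{p^k}^{\mathrm{iso}}(\widehat{\Gamma}_p) = a_{p^k}^{\mathrm{iso}}(\Lambda_p)$ for every $k$, yielding the claimed equality of local zeta functions. The main technical obstacle is the $\Z_p$-integral inversion of the Hausdorff series: one must verify that the iterative formulas recovering $[\cdot,\cdot]$, $+$ and scalar multiplication from $\ast$ introduce no denominators beyond those already absorbed into $m$. Once this is secured, the correspondence of sub-objects, indices and isomorphism classes is essentially bookkeeping.
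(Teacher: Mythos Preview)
Your overall architecture matches the paper's: realise $\widehat{\Gamma}_p$ as $\mathsf{exp}(\Lambda_p)$ on the underlying set $\Lambda_p$, then set up an index-preserving, isomorphism-respecting correspondence between sublattices of $\Lambda_p$ and subgroups of $\mathsf{exp}(\Lambda_p)$. The divergence lies in the device used to recover the Lie operations from~$\ast$.

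You propose to write $+$, $[\cdot,\cdot]$ and $\Z_p$-scalar multiplication as $\Z_p$-integral polynomial expressions in $\ast$, and you rightly flag the denominator control as the crux. This can indeed be done---it is essentially the Lazard correspondence for $p>c$---but it is not immediate from the definition of $m(c)$, which bounds only the denominators of the \emph{forward} Hausdorff series; showing that the inverse formulas introduce no new primes is an independent (if standard) argument that you do not supply. The paper bypasses this issue entirely by using the \emph{limit} formulas
\[
x + y = \lim_{k\to\infty} (x^{p^k} \ast y^{p^k})^{p^{-k}}, \qquad [x,y]_{\mathrm{Lie}} = \lim_{k\to\infty} \bigl([x^{p^k},y^{p^k}]_{\mathrm{grp}}\bigr)^{p^{-2k}},
\]
which involve only $p$-power maps and their inverses and hence no denominators whatsoever. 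Note too that the paper only establishes the correspondence for subgroups $H$ with $H\cong G$: it is precisely this hypothesis that transfers to $H$ the two structural properties of $G$ (namely $G^{\{p^k\}}=G^{p^k}$ with $[G^{p^k},G^{p^k}]\subseteq G^{p^{2k}}$, and injectivity of $x\mapsto x^p$) needed for the limit formulas to make sense inside~$H$. Your approach aims for the stronger statement that \emph{all} open finite-index subgroups arise from sublattices, which is true but not required.

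Finally, your index argument (``orders of the common quotient set'') is too brief: additive cosets $x+M$ and group cosets $x\ast M$ are a priori different subsets of $\Lambda_p$, so the two quotient sets are not literally the same object. The paper handles this by first checking that the cosets of $p^kL$ agree in the two structures, deducing that the additive and multiplicative Haar measures on $\Lambda_p$ coincide, and then reading off the index of a general $M$ as $\mu(M)^{-1}$ in either structure.
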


\begin{proof}
  It remains to show that the equality between the two zeta functions
  holds for all primes $p$ with $p \nmid m$.  Let $p$ be such a prime.
  We observe that $\Phi_c(X,Y)$ has coefficients in~$\Z_p$.  Write $L
  = \Z_p \otimes_\Z \Lambda = \Z_p \otimes_\Z m \Lambda$ and $G =
  \mathsf{exp}(L) = (L,*)$, where $x*y = \Phi_c(x,y)$ as above.
  Observe that $L$ and $G$ have the same underlying sets and that $G$,
  with the topology inherited from $L$, is a topological group.  To
  distinguish Lie and group commutators we write $[x,y]_\mathrm{Lie}$
  and $[x,y]_\mathrm{grp}$ respectively.

  Observe that $G^{p^k} = \langle x^{p^k} \mid x \in G \rangle$ is
  equal to $G^{\{p^k\}} = \{ x^{p^k} \mid x \in G \} =
  \mathsf{exp}(p^k L)$ for each $k \in \N$, because $p^k L$ is closed
  under~$*$.  Since the sublattices $p^k L$ form a base for the
  neighbourhoods of $0$, the group $G$ is a pro-$p$ group.  Similarly,
  one sees that $\Gamma^{p^k} = \Gamma^{\{p^k\}} =
  \mathsf{exp}(p^km\Lambda)$.  This implies
  \[
  \widehat{\Gamma}_p = \varprojlim \Gamma / \Gamma^{p^k} = \varprojlim
  \mathsf{exp}(m\Lambda) / \mathsf{exp}(p^k m\Lambda) \cong
  \varprojlim \mathsf{exp}(L) / \mathsf{exp}(p^k L) \cong
  \mathsf{exp}(L) = G.
  \]

  To finish the proof we verify that the construction $M \mapsto
  \mathsf{exp}(M)$ sets up an index-preserving one-to-one
  correspondence
  \[
  \{ M \mid M \leq L \text{ a Lie sublattice, $M \cong L$} \}
  \rightarrow \{ H \mid H \leq G \text{ a subgroup, $H \cong G$} \}.
  \]
  Since $\Phi_c(X,Y)$ has coefficients in $\Z_p$, every $\Z_p$-Lie
  sublattice $M$ of $L$ gives rise to a subgroup $\mathsf{exp}(M)$.
  Moreover, the construction is functorial so that $M \cong L$ implies
  $\mathsf{exp}(M) \cong \mathsf{exp}(L) = G$.  Conversely, suppose
  that $H$ is a subgroup of $G$ with $H \cong G$.  The group $G$ has
  the properties
  \[
  \text{(i) } G^{\{p^k\}} = G^{p^k} \text{ and } [G^{p^k},G^{p^k}]
  \subseteq G^{p^{2k}} \text{ for } k \in \N, \quad \text{(ii) } G
  \rightarrow G, x \mapsto x^p \text{ is injective.}
  \]
  Based on these, one can recover the Lie operations on $L$ from the
  group structure of $G$ by means of
  \begin{equation} \label{equ:limit-desc} x + y = \lim_{k \to \infty}
    (x^{p^k} * y^{p^k})^{p^{-k}} \qquad \text{and} \qquad
    [x,y]_{\mathrm{Lie}} = \lim_{k \to \infty}
    [x^{p^k},y^{p^k}]_{\mathrm{grp}}^{\, p^{-2k}}.
  \end{equation}
  Since $H \cong G$, the relations~\eqref{equ:limit-desc} applied to
  $H$ show that the set $H$ is closed under Lie addition and the Lie
  bracket.  Thus $H = \mathsf{exp}(M)$ for a Lie sublattice $M$ of
  $L$, and again by functoriality $M \cong L$.

  Finally, we claim that, for $x \in L$ and $k \in \N$, the Lie coset
  $x + p^k L$ and the group coset $x * G^{p^k}$ are equal as sets.  This
  will imply that the normalised Haar measures $\mu_{\mathrm{Lie}}$ on
  $L$ and $\mu_{\mathrm{grp}}$ on $G$ are the same so that for every
  Lie sublattice $M$ of $L$,
  \[
  \lvert L : M \rvert = \mu_{\mathrm{Lie}}(M)^{-1} =
  \mu_{\mathrm{grp}}(\mathsf{exp}(M))^{-1} = \lvert G :
  \mathsf{exp}(M) \rvert.
  \]
  To prove the claim, note that $x * G^{p^k} = \Phi_c(x,p^kL)
  \subseteq x + p^kL$.  The reverse inclusion follows inductively: one
  shows that $x + p^k \gamma_{c+1-i}(L) \subseteq x * G^{p^k}$ for $i
  \in \{1,\ldots,c\}$.
\end{proof}

Next we recall the notion of the \emph{algebraic automorphism group}
$\mathbf{Aut}(\Lambda)$ of a $\Z$-Lie lattice~$\Lambda$.  The group
$\mathbf{Aut}(\Lambda)$ is realised via a $\Z$-basis of $\Lambda$ as a
$\Q$-algebraic subgroup $\mathbf{G}$ of $\GL_d$, where $d$ denotes the
$\Z$-rank of $\Lambda$, so that
\[
\mathbf{G}(k) = \Aut_k(k \otimes_\Z L) \leq \GL_{d}(k)
\]
for every extension field $k$ of $\Q$.  It admits a natural arithmetic
structure so that
\[
\mathbf{G}(\Z) = \Aut(\Lambda) \quad \text{and} \quad \mathbf{G}(\Z_p)
= \Aut(\Z_p \otimes_\Z \Lambda) \text{ for every prime $p$.}
\]
We state~\cite[Proposition~3.4]{GrSeSm88}.

\begin{proposition}[Grunewald, Segal,
  Smith] \label{pro:integral-formula} Let $\Lambda$ be a nilpotent
  $\Z$-Lie lattice, with $\Z$-basis $\mathcal{E} = (e_1,\ldots,e_d)$
  say, and let $\mathbf{G} = \mathbf{Aut}(\Lambda) \subseteq \GL_d$
  denote the algebraic automorphism group of $\Lambda$, realised with
  respect to $\mathcal{E}$.  For each prime $p$, let
  \[
  G^+_p = \mathbf{G}(\Q_p) \cap \Mat_d(\Z_p) = \Aut(\Q_p \otimes_\Z
  \Lambda) \cap \End(\Z_p \otimes_\Z \Lambda)
  \]
  and let $\mu_p$ denote the right Haar measure on the locally compact
  group $\mathbf{G}(\Q_p)$ such that $\mu_p(\mathbf{G}(\Z_p)) =1$.
  Then for all primes $p$,
  \[
  \zeta_{\Z_p \otimes_\Z \Lambda}^{\mathrm{iso}}(s) = \int_{G^+_p}
  \lvert \det g \rvert_p^s \, d\mu_p(g).
  \]
\end{proposition}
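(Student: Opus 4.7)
The plan is to decompose $G^+_p$ into cosets of $\mathbf{G}(\Z_p)$ corresponding bijectively to Lie sublattices $M \leq L \coloneqq \Z_p \otimes_\Z \Lambda$ with $M \cong L$, and to exploit the fact that the integrand is constant on each coset. Given $g \in G^+_p$, the element $g$ acts as a $\Q_p$-Lie automorphism of $\Q_p \otimes_\Z \Lambda$ and carries $L$ into itself (because its matrix in $\mathcal{E}$ lies in $\Mat_d(\Z_p)$), so $g(L)$ is a Lie sublattice of $L$ that is $\Z_p$-Lie-isomorphic to~$L$ via the restriction of $g$. Conversely, any sublattice $M \leq L$ with $M \cong L$ arises this way: any Lie isomorphism $L \to M$ extends $\Q_p$-linearly to an element of $\mathbf{G}(\Q_p)$ whose matrix is in $\Mat_d(\Z_p)$, hence lies in~$G^+_p$. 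Two elements $g,g' \in G^+_p$ produce the same image precisely when they differ by the stabiliser of $L$ inside $\mathbf{G}(\Q_p)$, which equals $\mathbf{G}(\Q_p) \cap \GL_d(\Z_p) = \mathbf{G}(\Z_p)$. This identifies the set of sublattices $M \leq L$ with $M \cong L$ with the $\mathbf{G}(\Z_p)$-cosets in~$G^+_p$.

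On the coset corresponding to $M = g(L)$ the integrand $\lvert \det(\cdot) \rvert_p^s$ is constant of value $\lvert \det g \rvert_p^s$, since every $k \in \mathbf{G}(\Z_p) = \Aut(L)$ satisfies $\det k \in \Z_p^\times$ and so $\lvert \det k \rvert_p = 1$. The elementary-divisor theorem applied to $g \in \Mat_d(\Z_p) \cap \GL_d(\Q_p)$ yields $[L : g(L)] = \lvert \det g \rvert_p^{-1}$. The normalisation $\mu_p(\mathbf{G}(\Z_p)) = 1$ together with translation invariance of $\mu_p$ gives each coset measure~$1$, provided the coset decomposition is aligned with the direction of translation invariance — this is precisely why one works with right (rather than left) Haar measure, using the right action of $\mathbf{G}(\Q_p)$ on $L$ and the associated right cosets $\mathbf{G}(\Z_p)g$. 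Summing over cosets then produces
\[
\int_{G^+_p} \lvert \det g \rvert_p^s \, d\mu_p(g) = \sum_{\substack{M \leq L \\ M \cong L}} [L : M]^{-s} = \zeta^{\mathrm{iso}}_{\Z_p \otimes_\Z \Lambda}(s).
\]

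The main delicate point is the mutual compatibility of three ingredients: the side on which $\mathbf{G}(\Q_p)$ is made to act on sublattices, the side of the coset decomposition, and the side of Haar invariance. Once these are aligned so that each coset carries measure~$1$ and the integrand is constant along it, the remaining ingredients — identification of the stabiliser of $L$ with $\mathbf{G}(\Z_p)$, the Lie-isomorphism extension argument, and the elementary-divisor identity — are routine.
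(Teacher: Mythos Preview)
The paper does not supply its own proof of this proposition: it is quoted verbatim as \cite[Proposition~3.4]{GrSeSm88} and used as a black box. Your argument is the standard one and is correct: decompose $G_p^+$ into right cosets $\mathbf{G}(\Z_p)g$, observe that these are in bijection with sublattices $M\le L$ satisfying $M\cong L$ via $g\mapsto (L)g$, note that $\lvert\det(\cdot)\rvert_p^s$ is constant on each coset with value $\lvert L:M\rvert^{-s}$, and use right-invariance of $\mu_p$ to give each coset measure~$1$.

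One small point of presentation: you write $g(L)$ throughout, which is left-action notation, but your final paragraph (correctly) insists on the right action to make the coset decomposition compatible with the right Haar measure. With the right action $L\mapsto (L)g$, the stabiliser computation gives $(L)g=(L)g'$ iff $g'\in\mathbf{G}(\Z_p)g$, and then $\mu_p(\mathbf{G}(\Z_p)g)=\mu_p(\mathbf{G}(\Z_p))=1$ follows from right-invariance, exactly as you say. It would be cleaner to use $(L)g$ consistently, matching the paper's convention that automorphisms act on the right.
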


In~\cite[Section~2]{dSLu96} we find a treatment of $p$-adic integrals
as in Proposition~\ref{pro:integral-formula}, subject to a series of
simplifying assumptions.  Almost all of those assumptions, relevant
for our purposes, are not restrictive, in the sense that they can be
realised by an appropriate equivalent representation of the
automorphism group corresponding to a different choice of
basis~$\mathcal{E}$; this is shown in~\cite[Section 4]{dSLu96}.  The
integral of Proposition~\ref{pro:integral-formula} is unaffected by
this change of representation for almost all primes.  It will turn out
that the assumptions required for our application in
Section~\ref{sec:zeta-comp} are automatically satisfied without the
need for an equivalent representation.  

We now outline the assumptions. Let $\mathbf{G} \subseteq \GL_d$ be an
affine group scheme over~$\Z$.  Decompose the connected component of
the identity as a semidirect product $\mathbf{G}^\circ = \mathbf{N}
\rtimes \mathbf{H}$ of the unipotent radical $\mathbf{N}$ and a
reductive group~$\mathbf{H}$.  Fix a prime $p$.  The first assumption
is that
\[
\mathbf{G}(\Q_p) = \mathbf{G}(\Z_p) \mathbf{G}^\circ(\Q_p);
\]
loosely speaking, this allows us to work with the connected group
$\mathbf{G}^\circ$ rather than~$\mathbf{G}$.  We
write $G = \mathbf{G}^\circ(\Q_p)$, $N = \mathbf{N}(\Q_p)$, $H =
\mathbf{H}(\Q_p)$.  Assume further that $G \subseteq \GL_d(\Q_p)$ is
in block form, where $H$ is block diagonal and $N$ is block upper
unitriangular in the following sense.  There is a partition $d = r_1 +
\ldots + r_c$ such that, setting $s_i = r_1 + \ldots + r_{i-1}$ for $i
\in \{1,\ldots,c\}$,
\begin{enumerate}
\item[$\circ$] the vector space $V = \Q_p^d$ on which $G$ acts from
  the right decomposes into a direct sum of $H$-stable subspaces $U_i
  = \{ (0,\ldots,0) \} \times \Q_p^{r_i} \times \{ (0,\ldots,0) \}$,
  where the vectors $(0,\ldots,0)$ have $s_i$, respectively
  $d-s_{i+1}$, entries;
\item[$\circ$] setting $V_i = U_i \oplus \ldots \oplus U_c$, each
  $V_i$ is $N$-stable and $N$ acts trivially on $V_i/V_{i+1}$.
\end{enumerate}
For each $i \in \{2,\ldots,c+1\}$ let $N_{i-1} = N \cap
\ker(\psi'_i)$, where $\psi'_i \colon G \rightarrow \Aut(V/V_i)$
denotes the natural action.  Let $\psi_i \colon G/N_{i-1} \rightarrow
\Aut(V/V_i)$ denote the induced map, and define
\[
(G/N_{i-1})^+ = \psi_i^{-1} \big(\psi_i(G/N_{i-1}) \cap
\Mat_{s_i}(\Z_p) \big).
\]
Assume that for every $g_0 N_{i-1} \in (G/N_{i-1})^+$ there exists $g
\in G^+$ such that $g_0 N_{i-1} = g N_{i-1}$; this is the crucial
`lifting condition'~\cite[Assumption~2.3]{dSLu96}.  As explained in
\cite[p.~6]{Be11}, this lifting condition cannot in general be
satisfied by moving to an equivalent representation. For $i \in
\{2,\ldots,c\}$ there is a natural embedding of $N_{i-1}/N_i
\hookrightarrow (V_i/V_{i+1})^{s_i}$ via the action of $N_{i-1}/N_i$
on $V/V_{i+1}$, recorded on the natural basis.  The action of $H$ on
$V_i/V_{i+1}$ induces, for each $h \in H$, a map
\[
\tau_i(h) \colon N_{i-1}/N_i \hookrightarrow (V_i/V_{i+1})^{s_i}.
\]
Define $\theta_{i-1} \colon H \rightarrow [0,1]$ by setting
\[
\theta_{i-1}(h) = \mu_{N_{i-1}/N_i} \big( \{n N_i \in N_{i-1}/N_i \mid (n
N_i) \tau_i(h) \in \Mat_{s_i,r_i}(\Z_p)\} \big),
\]  
where $\mu_{N_{i-1}/N_i}$ denotes the right Haar measure on
$N_{i-1}/N_i$, normalised such that the set $\psi_{i+1}^{-1}
(\psi_{i+1}(N_{i-1}/N_i) \cap \Mat_{s_{i+1}}(\Z_p))$ has measure~$1$.
Write $\mu_G$, respectively $\mu_H$, for the right Haar measure
on~$G$, respectively $H$, normalised such that
$\mu_G(\mathbf{G}(\Z_p)) = 1$ and $\mu_H( \mathbf{H}(\Z_p)) = 1$.
From $G = N \rtimes H$ one deduces that $\mu_G = \prod_{i=2}^c
\mu_{N_{i-1}/N_i} \cdot \mu_H$.  Setting $G^+ = G \cap \Mat_d(\Z_p)$
and $H^+ = H \cap \Mat_d(\Z_p)$, we can now
state~\cite[Theorem~2.2]{dSLu96}.

\begin{theorem}[du Sautoy and Lubotzky] \label{thm:dS-Lu} In the
  set-up described above, subject to the various assumptions,
  \[
  \int_{G^+} \lvert \det g \rvert_p^s \, d\mu_G (g) = \int_{H^+}
  \lvert \det h \rvert_p^s \, \prod_{i=1}^{c-1} \theta_i(h) \, d\mu_H(h).
  \]
\end{theorem}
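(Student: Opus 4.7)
The plan is to exploit the semidirect product structure $G = N \rtimes H$ together with the filtration $N = N_1 \supseteq N_2 \supseteq \cdots \supseteq N_c = \{1\}$ to reduce the integral over $G^+$ to one over $H^+$ by integrating out the unipotent fibres. Every $g \in G$ factors uniquely as $g = nh$ with $n \in N$, $h \in H$. Since $N$ is unipotent, the diagonal blocks of $g$ in the decomposition $V = U_1 \oplus \cdots \oplus U_c$ coincide with those of $h$, so $\det g = \det h$ and hence $\lvert \det g \rvert_p^s = \lvert \det h \rvert_p^s$ depends only on $h$. Combining this with the decomposition $\mu_G = \prod_{i=2}^c \mu_{N_{i-1}/N_i} \cdot \mu_H$ and Fubini's theorem rewrites the left-hand side as
\[
\int_H \lvert \det h \rvert_p^s \left( \int_N \mathbf{1}_{G^+}(nh) \, d\mu_N(n) \right) d\mu_H(h).
\]

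Next I would unravel the integrality condition $nh \in G^+$ level by level. In block form, the diagonal blocks of $nh$ being integral forces $h \in H^+$, while the remaining off-diagonal blocks are controlled by the successive maps $\psi_i \colon G/N_{i-1} \to \Aut(V/V_i)$. Here the lifting condition \cite[Assumption~2.3]{dSLu96} is indispensable: it guarantees that integrality of the image under $\psi_i$ can be realised by a genuinely integral lift in $G^+$, so that the constraint $nh \in G^+$ factors as a conjunction of independent conditions on $h$ and on each quotient $N_{i-1}/N_i$. Without it, integrality at one level could in principle obstruct integrality at another.

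With the factorisation in place, for fixed $h \in H^+$ I would compute the inner integral as an iterated integral over the quotients $N_{i-1}/N_i$. Using the natural embedding $N_{i-1}/N_i \hookrightarrow (V_i/V_{i+1})^{s_i}$ twisted by $\tau_i(h)$, the condition imposed on a coset $nN_i$ reads $(nN_i)\tau_i(h) \in \Mat_{s_i,r_i}(\Z_p)$, and by the definition of $\theta_{i-1}$ and the chosen normalisation of $\mu_{N_{i-1}/N_i}$ its measure is exactly $\theta_{i-1}(h)$. Multiplying over $i = 2, \ldots, c$ and reindexing $j = i-1$ yields the required factor $\prod_{j=1}^{c-1} \theta_j(h)$.

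The main obstacle lies in the second step: verifying rigorously that the lifting condition decouples the integrality constraints at different filtration levels, and that the prescribed Haar normalisations on $N_{i-1}/N_i$ are tuned precisely so the conditional integrations collapse to the $\theta_{i-1}(h)$ with no spurious factors. A clean way to organise the argument is by induction on the class $c$: the base case $c=1$ is trivial as $N$ is then trivial; in the induction step one projects to $G/N_{c-1}$, applies the hypothesis to this quotient (which has one fewer filtration step), and uses the lifting condition to reinstate the fibre integration over $N_{c-1}$, producing the final factor $\theta_{c-1}(h)$.
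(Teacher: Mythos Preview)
The paper does not prove this theorem at all: it is quoted verbatim as \cite[Theorem~2.2]{dSLu96} and used as a black box in Section~\ref{sec:zeta-comp}. There is therefore no proof in the paper to compare your proposal against.

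That said, your outline is a faithful sketch of the argument in~\cite{dSLu96}. The decomposition $g = nh$, the observation $\det g = \det h$, the Fubini step using $\mu_G = \prod_{i=2}^c \mu_{N_{i-1}/N_i} \cdot \mu_H$, and the level-by-level factorisation of the integrality condition via the lifting assumption are exactly the ingredients used there. Your identification of the lifting condition as the crux --- ensuring that integrality at each stage of the filtration can be achieved independently and hence that the inner integral factors as $\prod_{i} \theta_i(h)$ --- is correct, and the inductive organisation you propose is essentially how du~Sautoy and Lubotzky structure the argument. If you want to turn this into a full proof rather than a sketch, the point requiring most care is the bookkeeping showing that, for fixed $h \in H^+$, the set $\{n \in N \mid nh \in G^+\}$ really is a product of the sets $\{nN_i \mid (nN_i)\tau_i(h) \in \Mat_{s_i,r_i}(\Z_p)\}$ under the chosen normalisations; this is where the lifting condition and the specific normalisation of each $\mu_{N_{i-1}/N_i}$ are used simultaneously.
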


For later use, we record a simple fact that is useful for detecting
when an element of~$G$, arising as in
Proposition~\ref{pro:integral-formula}, is integral.

\begin{lemma} \label{lem:enough-xyz} Suppose that $\Lambda$ is a
  $\Z$-Lie lattice of $\Z$-rank~$d$, generated by elements $x_1,
  \ldots, x_r$.  Put $L= \Z_p \otimes_\Z \Lambda$, and let $g \in G =
  \mathbf{G}(\Q_p)$, where $\mathbf{G} = \mathbf{Aut}(\Lambda)
  \subseteq \GL_d$.

  If $(x_1)g, \ldots, (x_r)g \in L$, then $g \in G^+ =
  \mathbf{G}(\Q_p) \cap \Mat_d(\Z_p)$.
\end{lemma}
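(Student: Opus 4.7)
The plan is to exploit the fact that $g$ is a Lie algebra automorphism together with the hypothesis that the $x_i$ generate $\Lambda$ as a $\Z$-Lie lattice, in order to conclude that $(L)g \subseteq L$; this is equivalent to the matrix of $g$, with respect to the chosen basis, having entries in $\Z_p$.

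First, I would unwind what it means that $x_1,\ldots,x_r$ generate $\Lambda$ as a $\Z$-Lie lattice: the $\Z$-submodule of $\Lambda$ spanned by all iterated Lie monomials $w(x_1,\ldots,x_r)$ in the generators is all of $\Lambda$. Tensoring with $\Z_p$ over $\Z$ preserves this property, so $L = \Z_p \otimes_\Z \Lambda$ is the $\Z_p$-span of the same set of Lie monomials $w(x_1,\ldots,x_r)$.

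Next, because $g \in \mathbf{Aut}(\Lambda)(\Q_p)$ is by definition a $\Q_p$-Lie algebra automorphism of $\Q_p \otimes_\Z \Lambda$, it commutes with $\Q_p$-linear combinations and the Lie bracket. Hence for every Lie monomial $w$,
\[
\bigl(w(x_1,\ldots,x_r)\bigr)g = w\bigl((x_1)g,\ldots,(x_r)g\bigr).
\]
By hypothesis $(x_i)g \in L$ for each $i$, and $L$ is a $\Z_p$-Lie sublattice (in particular closed under the Lie bracket), so the right-hand side lies in $L$. Combining the two observations, every $\Z_p$-linear combination of Lie monomials in the $x_i$ is sent by $g$ into $L$, and this covers all of $L$; therefore $(L)g \subseteq L$, giving $g \in \Mat_d(\Z_p)$ with respect to our $\Z_p$-basis and hence $g \in G^+$.

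There is no real obstacle here: the argument is a structural manipulation once one observes that Lie-lattice generation is preserved under base change from $\Z$ to $\Z_p$ and that Lie automorphisms carry Lie monomials to Lie monomials. The only point requiring a tiny bit of care is confirming that "generated as a Lie lattice" implicitly allows iterated brackets (not merely single brackets), but this is immediate from the inductive definition of a Lie subring generated by a set.
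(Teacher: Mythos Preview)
Your proof is correct and follows essentially the same approach as the paper's: both argue that Lie monomials in the generators span $L$ over $\Z_p$, that a Lie automorphism sends such a monomial to the corresponding monomial in the images $(x_i)g$, and that closure of $L$ under the bracket together with linearity then gives $(L)g \subseteq L$. The only cosmetic difference is that you make the base-change step from $\Z$ to $\Z_p$ explicit, whereas the paper simply asserts that the $x_i$ generate $L$ as a $\Z_p$-Lie lattice.
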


\begin{proof}
  Note that $g\in G^+$ if and only if $(z)g\in L$ for all $z \in L$.
  Suppose that $(x_1)g, \ldots, (x_r)g \in L$.  Let $w \in L$ be an
  arbitrary commutator in $x_1,\ldots, x_r$, i.e.\ $w = [y_1, \ldots,
  y_n]$ with $y_i \in \{x_1,\ldots,x_r\}$ for $1 \leq i \leq n$.  Then
  $(w)g = [(y_1)g, \cdots, (y_n)g]$ is a commutator of elements of
  $L$, hence lies in~$L$.  By linearity, $(z)g \in L$ for all $z \in
  L$, since $x_1, \ldots, x_r$ generate $L$ as a $\Z_p$-Lie lattice.
  Thus $g\in G^+$.
\end{proof}

%%%%%

\section{The $\Z$-Lie lattice $\Lambda$} \label{sec:descr.of.L} Let
$\mathcal{F}$ be the free nilpotent $\Z$-Lie ring of class $4$ on
generators $X, Y, Z$.  In the following we will primarily be dealing
with Lie rings and it is convenient to use simple product notation for
the Lie bracket and left-normed notation.  Thus we write $XYZ$ in
place of $[[X,Y],Z]$.  The Hall collection process (see
Appendix~\ref{sec:appendix_A}) yields the following graded $\Z$-basis
for $\mathcal{F}$:
\begin{equation}\label{equ:F_basis}
  \begin{split}
    & X,\, Y,\, Z, \qquad
    XY,\, XZ,\, YZ, \\
    & XYY,\, XZZ,\, XYZ,\, XZY,\, XYX,\, XZX,\,
    \underline{YZY},\, \underline{ZYZ}, \\
    & XYYY,\, XZZZ,\, XYXX,\, XZXX,\, XYXY,\, XZXZ, \\
    & XYXZ,\, XZXY,\, XYZX,\, XYZZ,\, XZYY,\, XYZY,\\  
    & XZYZ,\,\underline{XYYZ}, \underline{XZZY},\, 
    \underline{YZYY},\, \underline{YZYZ},\, \underline{ZYZZ}.
  \end{split}
\end{equation}
In particular, $\dim_\Q (\Q \otimes_\Z \mathcal{F}) = 32$.  

Let $\mathcal{I}$ be the Lie ideal of $\mathcal{F}$ generated by the
two relations
\[
R_1 = YXXX - YZY \quad \text{and} \quad R_2 = ZXXX - ZYZ.
\]
We determine a basis for~$\mathcal{I}$.  Examining terms of the form
$R_1 S$ and $R_2 S$ for $S \in \{X, Y, Z\}$, we obtain a spanning set
for $\mathcal{I}$, namely
\[
R_1, R_2, \quad YZYX,\, YZYY,\, YZYZ, \quad ZYZX,\, ZYZY,\, ZYZZ.
\]
We make repeated use of the following basic Lie identity, cf.\
\eqref{equ:PQRS-appendix} in Appendix~\ref{sec:appendix_A}:
\begin{equation} \label{equ:PQRS} 
  PQRS = PSQR + SQPR + RSQP + SRPQ.
\end{equation}
The substitution $(P,Q,R,S) = (Y,Z,Y,Z)$ yields $YZYZ = 2YZZY +
ZYYZ$, hence
\[
YZYZ = YZZY = -ZYZY.
\]
This shows that we may omit $ZYZY$ from the spanning set of
$\mathcal{I}$.  We claim that the remaining seven elements 
\[
R_1, R_2, \quad YZYX,\, ZYZX,\quad \underline{YZYY},\,
\underline{YZYZ},\, \underline{ZYZZ}.
\]
form a $\Z$-basis for $\mathcal{I}$; in particular, $\dim_\Q (\Q
\otimes_\Z \mathcal{I}) = 7$.  Indeed, we only need to look at the
elements of weight~$4$.  The last three of them are part of the
basis~\eqref{equ:F_basis}, and evaluating~\eqref{equ:PQRS} for
$(P,Q,R,S)=(Y,Z,Y,X)$ and $(P,Q,R,S)=(Z,Y,Z,X)$ we can express the
remaining two spanning elements also in terms of the
basis~\eqref{equ:F_basis}:
\begin{equation} \label{equ:YZYX-identity}
  \begin{split}
    YZYX & = -2 XYZY + XZYY + \underline{XYYZ}, \\
    ZYZX & = -2 XZYZ + XYZZ + \underline{XZZY}.
  \end{split}
\end{equation}

Set $\Lambda = \mathcal{F}/\mathcal{I}$ and write $x, y, z$ for the
images of $X, Y, Z$ in~$\Lambda$.  From our description of
$\mathcal{I}$ we conclude that $\dim_\Q (\Q \otimes_\Z \Lambda) = 25$
and that $\Lambda$ admits the following $\Z$-basis, obtained by
omitting the images of the underlined elements in~\eqref{equ:F_basis}
matching the underlined terms in the expressions above:
 
\begin{equation} \label{equ:L_basis}
  \begin{split}
    & x,\, y,\, z, \quad
    xy,\, xz,\, yz, \\
    & xyy,\, xzz,\, xyz,\, xzy,\, xyx,\, xzx,\, \\
    & xyyy,\, xzzz,\, xyxx,\, xzxx,\, xyxy,\, xzxz, \\
    & xyxz,\, xzxy,\, xyzx,\, xyzz,\, xzyy,\, xyzy,\, xzyz.
\end{split}
\end{equation}
For later use we record two consequences of our discussion,
see~\eqref{equ:YZYX-identity}:
\begin{equation}
  \label{equ:xyzy-identity}
  2 xyzy = xzyy + xyyz \qquad \text{and} \qquad
  2 xzyz = xyzz + xzzy.
\end{equation}

Finally, we make the following
observation, which can be checked by direct calculation using standard
identities and~\eqref{equ:PQRS}; in verifying the claim, it is
convenient to consider first the analogous statement for $\mathcal{F}$
and subsequently pass to the quotient~$\Lambda$.

\begin{lemma} \label{lem:same-weight} Every commutator $w$ of length
  $4$ in $x,y,z \in \Lambda$ can be expressed uniquely as a
  $\Z$-linear combination of elements of the basis~\eqref{equ:L_basis}
  having the same weights for $x,y,z$; that is: $x,y,z$ appear in each
  of these basis elements with the same multiplicity as in the
  original~$w$.
\end{lemma}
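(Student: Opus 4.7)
The plan is to follow the hint: first prove the corresponding statement for $\mathcal{F}$ using its natural multigrading, then show this multigrading descends to the length-$4$ part of $\Lambda$. Since $\mathcal{F}$ is free nilpotent on the generators $X, Y, Z$, it carries a natural $\N^3$-multigrading where the weight $(a,b,c)$ of an iterated bracket counts the occurrences of $X, Y, Z$. Every element of the basis \eqref{equ:F_basis} is a single basic commutator, hence multi-homogeneous, so each homogeneous component is spanned over $\Z$ by the basis elements of that weight. A commutator $W$ of length $4$ in $X,Y,Z$ is multi-homogeneous of some weight $(a,b,c)$ with $a+b+c=4$, so it admits a unique expression as a $\Z$-linear combination of the length-$4$ basis elements of \eqref{equ:F_basis} of weight $(a,b,c)$. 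This settles the free case.

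The remaining task is to show that this multigrading descends to $\gamma_4(\Lambda) \cong \gamma_4(\mathcal{F}) / (\gamma_4(\mathcal{F}) \cap \mathcal{I})$. For this I would verify that $\gamma_4(\mathcal{F}) \cap \mathcal{I}$ is a multi-homogeneous submodule of $\gamma_4(\mathcal{F})$. From the excerpt, $\mathcal{I}$ has the $\Z$-basis $\{R_1, R_2, YZYX, YZYY, YZYZ, ZYZX, ZYZZ\}$. The last five of these are already multi-homogeneous basic commutators (of weights $(1,2,1)$, $(0,3,1)$, $(0,2,2)$, $(1,1,2)$, $(0,1,3)$ respectively) and lie in $\gamma_4(\mathcal{F})$. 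Only $R_1 = YXXX - YZY$ and $R_2 = ZXXX - ZYZ$ fail to be multi-homogeneous, and their non-homogeneous parts are precisely the length-$3$ terms $-YZY$ and $-ZYZ$. Hence for a $\Z$-combination $\xi = \alpha_1 R_1 + \alpha_2 R_2 + \alpha_3 YZYX + \cdots$ to lie in $\gamma_4(\mathcal{F})$, its image in $\gamma_3(\mathcal{F})/\gamma_4(\mathcal{F})$ must vanish; but that image is $-\alpha_1 YZY - \alpha_2 ZYZ$, and since $YZY$ and $ZYZ$ are linearly independent in $\gamma_3(\mathcal{F})/\gamma_4(\mathcal{F})$, this forces $\alpha_1 = \alpha_2 = 0$. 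Thus $\gamma_4(\mathcal{F}) \cap \mathcal{I}$ is spanned by the five multi-homogeneous elements above, hence is multigraded.

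With this, the multigrading descends to a decomposition $\gamma_4(\Lambda) = \bigoplus_{a+b+c=4} \gamma_4(\Lambda)_{(a,b,c)}$, and the length-$4$ portion of the basis \eqref{equ:L_basis} -- together with the identities \eqref{equ:xyzy-identity} rewriting the omitted $XYYZ$ and $XZZY$ as weight-preserving combinations of retained basis elements -- furnishes each homogeneous component $\gamma_4(\Lambda)_{(a,b,c)}$ with a $\Z$-basis consisting of exactly the length-$4$ members of \eqref{equ:L_basis} of weight $(a,b,c)$ (possibly empty, in which case $\gamma_4(\Lambda)_{(a,b,c)} = 0$, matching e.g.\ weights $(0,3,1)$, $(0,2,2)$, $(0,1,3)$ that correspond to the vanishing classes). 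A length-$4$ commutator $w$ in $x,y,z$ is the image of a multi-homogeneous lift $W \in \mathcal{F}$ of the same weight, so $w$ lies in $\gamma_4(\Lambda)_{(a,b,c)}$, yielding the claimed expression; uniqueness is immediate since \eqref{equ:L_basis} is a $\Z$-basis of $\Lambda$.

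The main obstacle is justifying that the multigrading of $\gamma_4(\mathcal{F})$ survives the quotient, given that the defining relations $R_1$ and $R_2$ are themselves non-homogeneous. The crux is that their non-homogeneity is confined to the length-$3$ component and is linearly independent there from the contributions of the other ideal generators; all contamination therefore disappears upon intersecting with $\gamma_4(\mathcal{F})$. A pedagogically useful way to organise the verification is to handle first the analogous assertion for $\mathcal{F}$, where multigrading is automatic, and then isolate exactly how the two non-homogeneous relations affect $\gamma_4$, which is the calculation above.
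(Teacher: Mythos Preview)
Your argument is correct and follows exactly the outline the paper suggests (first settle the claim in $\mathcal{F}$, then pass to the quotient~$\Lambda$); the paper itself does not spell out a proof beyond that hint and an appeal to ``direct calculation using standard identities and~\eqref{equ:PQRS}''. Your use of the natural $\N^3$-multigrading on $\mathcal{F}$, together with the observation that $\gamma_4(\mathcal{F}) \cap \mathcal{I}$ is spanned by the five multi-homogeneous elements $YZYX, YZYY, YZYZ, ZYZX, ZYZZ$ (forcing $\alpha_1 = \alpha_2 = 0$ via the independence of $YZY, ZYZ$ in $\gamma_3(\mathcal{F})/\gamma_4(\mathcal{F})$), is a clean and conceptually transparent way to carry out what the paper leaves as a direct check.
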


%%%%%

\section{The algebraic automorphism group
  $\mathbf{Aut}(\Lambda)$} \label{sec:aut-comp}

Using the $\Z$-basis of $\Lambda$ listed in~\eqref{equ:L_basis}, the
algebraic automorphism group $\mathbf{Aut}(\Lambda)$ of $\Lambda$ can
be realised as a $\Q$-algebraic subgroup $\mathbf{G}$ of $\GL_{25}$ so
that
\[
\mathbf{G}(k) = \Aut_k(k \otimes_\Z \Lambda) \leq \GL_{25}(k)
\]
for every extension field $k$ of~$\Q$.  Moreover,
\[
\mathbf{G}(\Z) = \Aut(\Lambda) \quad \text{and} \quad \mathbf{G}(\Z_p)
= \Aut(\Z_p \otimes_\Z \Lambda) \text{ for every prime $p$.}
\]
We observe that for every extension field $k$ of $\Q$,
\begin{enumerate}
\item every $k$-automorphism of $k \otimes_\Z \Lambda$ lifts to a
  $k$-automorphism of $k \otimes_\Z \mathcal{F}$, and $\alpha \in
  \Aut_k(k \otimes_\Z \mathcal{F})$ induces an automorphism $\alpha_{k
    \otimes \Lambda} \in \Aut_k(k \otimes_\Z \Lambda)$ if and only if
  $(k \otimes_\Z \mathcal{I}) \alpha \subseteq k \otimes_\Z
  \mathcal{I}$;
\item every $\alpha \in \Aut_k(k \otimes_\Z \mathcal{F})$ is uniquely
  determined by the images $X\alpha$, $Y\alpha$, $Z\alpha$; conversely
  any choice $X_0$, $Y_0$, $Z_0 \in k \otimes_\Z \mathcal{F}$ with $k
  \otimes_\Z \mathcal{F} = \Span_k \langle X_0, Y_0, Z_0 \rangle +
  \gamma_2(k \otimes_\Z \mathcal{F})$ yields a
  $k$\nobreakdash-automorphism $\alpha$ of $k \otimes_\Z \mathcal{F}$
  such that $X\alpha = X_0$, $Y\alpha = Y_0$, $Z\alpha =Z_0$.
\end{enumerate}
Consequently, we identify two natural subgroups of
$\mathbf{Aut}(\Lambda)$.  The affine algebraic group
\[
\mathbf{T} = \left\{
  \left( \begin{smallmatrix} 
    \lambda & 0 & 0 \\
    0 & \mu & 0 \\
    0 & 0 & \nu
  \end{smallmatrix} \right)
  \in \GL_{3} \mid \lambda^3 = \mu \nu \right\} \rtimes \left\{
  \left( \begin{smallmatrix} 
    1&0&0 \\
    0&1&0 \\
    0&0&1
  \end{smallmatrix} \right),
  \left( \begin{smallmatrix}
    1&0&0\\
    0&0&1\\
    0&1&0
  \end{smallmatrix} \right)
\right\}.
\]
admits a $\Q$-defined faithful representation with image
$\widetilde{\mathbf{T}} \leq \GL_{32}$ such that, for every extension
$k$ of $\Q$, the group $\widetilde{\mathbf{T}}(k)$ corresponds to the
subgroup of $\Aut_k(k \otimes_\Z \mathcal{F})$ obtained from extending
the natural action of $\mathbf{T}(k)$ on $\Span_k \langle X, Y, Z
\rangle$ to an action on $k \otimes_\Z \mathcal{F}$:
\[
  \begin{pmatrix}
    \lambda & 0 & 0 \\
    0 & \mu & 0 \\
    0 & 0 & \nu
  \end{pmatrix}
  \text{ acts via }
  \begin{cases}
    X &\!\!\mapsto  \lambda X  \\
    Y &\!\!\mapsto  \mu Y\\ 
    Z &\!\!\mapsto  \nu Z  
  \end{cases},
  \qquad
  \begin{pmatrix}
    1&0&0\\
    0&0&1\\
    0&1&0
  \end{pmatrix}
  \text{ acts via }
  \begin{cases}
    X &\!\! \mapsto  X  \\
    Y &\!\! \mapsto  Z \\ 
    Z &\!\! \mapsto  Y  
  \end{cases}.
\]
We remark that the connected component of $\mathbf{T}$ is isomorphic
to the group $\rho_{2,3}(T_2)$ presented in
\cite[Proposition~6.1]{Be11} which defines an integral not satisfying
a functional equation. Thus it is reasonable to investigate
$\mathbf{Aut}(\Lambda)$ in the hope that it will provide similar
behaviour.

We also consider the $\Q$-defined algebraic group $\mathbf{M}$ such
that, for every extension $k$ of $\Q$, the group $\mathbf{M}(k)$
consists of all $k$-automorphisms of $k \otimes_\Z \mathcal{F}$ of the
form
\begin{equation}\label{equ:elmt-M}
  \begin{array}{llll}
    X & \mapsto & X + U, & U \in\gamma_2(k \otimes_\Z \mathcal{F}) \\
    Y & \mapsto & Y + \upsilon XY + \sigma YZ + V, & V \in\gamma_3(k
    \otimes_\Z \mathcal{F})\\ 
    Z & \mapsto & Z + \upsilon XZ + \tau YZ + W, & W \in\gamma_3(k
    \otimes_\Z \mathcal{F}), \\ 
  \end{array}
\end{equation}
where $\upsilon, \sigma, \tau \in k$. Note that
$\widetilde{\mathbf{T}}(k)$ acts faithfully on $(k \otimes_\Z
\mathcal{F})/\gamma_2(k \otimes_\Z \mathcal{F})$, whereas
$\mathbf{M}(k)$ acts trivially on $(k \otimes_\Z
\mathcal{F})/\gamma_2(k \otimes_\Z \mathcal{F})$.  We define $
\mathbf{S} = \mathbf{M} \rtimes \widetilde{\mathbf{T}}$ so that $
\mathbf{S}(k) = \mathbf{M}(k) \rtimes \widetilde{\mathbf{T}}(k) \leq
\Aut_k(k \otimes_\Z \mathcal{F})$.

\begin{lemma}
  The ideal $k \otimes_\Z \mathcal{I}$ is invariant under the action
  of $ \mathbf{S}(k)$.
\end{lemma}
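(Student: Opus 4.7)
The plan is to reduce invariance of $k \otimes_\Z \mathcal{I}$ under $\mathbf{S}(k)$ to invariance of the two generators $R_1, R_2$ of the ideal: since $\mathcal{I}$ is the Lie ideal generated by $R_1, R_2$ and since $\mathbf{S}(k)$ acts by Lie automorphisms of $k \otimes_\Z \mathcal{F}$, it suffices to verify $R_1\alpha, R_2\alpha \in k \otimes_\Z \mathcal{I}$ for every $\alpha \in \mathbf{S}(k)$. Using the decomposition $\mathbf{S} = \mathbf{M} \rtimes \widetilde{\mathbf{T}}$, I treat the two factors separately.

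The $\widetilde{\mathbf{T}}(k)$-case is immediate from the explicit description of the action. A diagonal element acting by $X \mapsto \lambda X$, $Y \mapsto \mu Y$, $Z \mapsto \nu Z$ subject to $\lambda^3 = \mu\nu$ multiplies both $YXXX$ and $YZY$ by $\mu^2 \nu$, hence scales $R_1$; likewise it scales $R_2$ by $\mu \nu^2$. The involution $Y \leftrightarrow Z$ interchanges $R_1$ and $R_2$.

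The main work concerns $\alpha \in \mathbf{M}(k)$. Set $\delta = \alpha - \mathrm{id}$, noting $\delta(X), \delta(Y), \delta(Z) \in \gamma_2(k \otimes_\Z \mathcal{F})$ by~\eqref{equ:elmt-M}. Because $\mathcal{F}$ has class~$4$ and $YXXX$ has weight~$4$, expanding $[\alpha(Y), \alpha(X), \alpha(X), \alpha(X)]$ term by term shows $\alpha(YXXX) = YXXX$: any summand containing at least one $\delta$-factor has weight $\geq 5$ and hence vanishes. The case $\alpha(YZY)$ is more delicate, because $YZY$ has weight~$3$ and weight-$4$ corrections do survive. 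A direct expansion, discarding all contributions of weight $\geq 5$, yields a sum of terms proportional to $\tau\, YZYY$, $\sigma\, YZZY$ and a $\upsilon$-multiple of $[YZ,XY] - XZYY + XYZY$.

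The $\tau$-term lies in $\mathcal{I}$ since $YZYY \in \mathcal{I}$, and the $\sigma$-term since $YZZY = YZYZ$ and $YZYZ \in \mathcal{I}$. The main obstacle is the $\upsilon$-part, which is not manifestly in $\mathcal{I}$. To handle it, I expand $[YZ, XY]$ via the Jacobi identity together with~\eqref{equ:PQRS} applied to $(P,Q,R,S) = (Y,Z,X,Y)$; this produces $[YZ,XY] = XYZY - XYYZ$, so the $\upsilon$-contribution collapses to $\upsilon\,(2XYZY - XZYY - XYYZ)$, which by~\eqref{equ:YZYX-identity} equals $-\upsilon\, YZYX \in k \otimes_\Z \mathcal{I}$. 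This gives $\alpha(R_1) \in k \otimes_\Z \mathcal{I}$; the statement for $R_2$ follows by the symmetry $Y \leftrightarrow Z$, $\sigma \leftrightarrow \tau$ interchanging $R_1$ and $R_2$. The nontrivial input is precisely that the combination $2XYZY - XZYY - XYYZ$ produced by the $\upsilon$-terms coincides, by the design of $R_1, R_2$, with $-YZYX$; this is the reason the relations defining $\mathcal{I}$ generate an $\mathbf{M}$-invariant ideal rather than just a $\widetilde{\mathbf{T}}$-invariant one.
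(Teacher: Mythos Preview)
Your argument is correct and follows essentially the same route as the paper's proof: reduce to checking invariance on the ideal generators $R_1,R_2$, handle $\widetilde{\mathbf{T}}(k)$ by noting it scales or swaps $R_1,R_2$, and for $\alpha\in\mathbf{M}(k)$ compute $\alpha(R_i)-R_i$ as a weight-$4$ element of $k\otimes_\Z\mathcal{I}$; your identification of the $\upsilon$-contribution with $-YZYX$ via~\eqref{equ:YZYX-identity} is exactly the paper's key step. One small remark: under the involution $\iota$ swapping $Y$ and $Z$ one has $\iota(YZ)=-YZ$, so conjugation by $\iota$ sends $(\sigma,\tau)$ to $(-\tau,-\sigma)$ rather than $(\tau,\sigma)$; this does not affect your symmetry reduction for $R_2$, since all you need is that $\mathbf{M}(k)$ is stable under conjugation by $\iota$ and that $\iota$ preserves $k\otimes_\Z\mathcal{I}$.
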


\begin{proof}
  Elements of $\widetilde{\mathbf{T}}(k)$ map $R_1$ and $R_2$ to
  scalar multiples of themselves. Furthermore, elements of
  $\mathbf{M}(k)$ fix $R_1$ and $R_2$ modulo $\Span_k \langle R_1 X,
  R_1 Y, R_1 Z, R_2 X, R_2 Y, R_2 Z \rangle$.  Indeed, for any element
  $\alpha \in \mathbf{M}(k)$ as in~\eqref{equ:elmt-M}, using the fact
  that commutators of length greater than $4$ are trivial, short
  calculations based on the identities~\eqref{equ:base-change}
  and~\eqref{equ:YZYX-identity} yield
  \begin{align*}
    (R_1)\alpha & = R_1+\upsilon YZYX +\sigma ZYZY+\tau YZYY \in k
    \otimes_\Z \mathcal{I}, \\
    (R_2)\alpha & = R_2+\upsilon ZYZX -\tau YZYZ-\sigma ZYZZ \in k
    \otimes_\Z \mathcal{I}. \qedhere
  \end{align*}
\end{proof}

The action of $ \mathbf{S}(k)$ on $k \otimes_\Z \Lambda$ can be
described in terms of a $\Q$-defined morphism $\rho \colon  \mathbf{S}
\rightarrow \mathbf{Aut}(\Lambda)$ to the algebraic automorphism
group.  Let $\mathbf{K} = \ker(\rho)$.

\begin{theorem} \label{thm:Aut(L)} The morphism $\rho \colon
  \mathbf{S} \rightarrow \mathbf{Aut}(\Lambda)$ induces an isomorphism
  \[
  \mathbf{Aut}(\Lambda) = \img(\rho) \cong  \mathbf{S}/\mathbf{K}.
  \]
\end{theorem}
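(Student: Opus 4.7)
The inclusion $\img(\rho) \subseteq \mathbf{Aut}(\Lambda)$ is already furnished by the preceding lemma, and the isomorphism $\img(\rho) \cong \mathbf{S}/\mathbf{K}$ is a formal consequence of the first isomorphism theorem for affine algebraic groups. The real content lies in the surjectivity of $\rho$, which is what I plan to establish.

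Given a field extension $k/\Q$ (passing to the algebraic closure if needed) and $\bar{\alpha} \in \mathbf{Aut}(\Lambda)(k)$, lift via observation~(i) to some $\alpha \in \Aut_k(k \otimes_\Z \mathcal{F})$ preserving $k \otimes_\Z \mathcal{I}$, and parametrise it as
\[
X\alpha = aX + bY + cZ + U, \quad Y\alpha = dX + eY + fZ + V, \quad Z\alpha = gX + hY + iZ + W,
\]
with $U, V, W \in \gamma_2(k \otimes_\Z \mathcal{F})$. The plan is to exploit $(R_1)\alpha, (R_2)\alpha \in k \otimes_\Z \mathcal{I}$ filtration by filtration, using \eqref{equ:PQRS} and Jacobi to rewrite any non-Hall commutators in terms of the basis~\eqref{equ:F_basis}.

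At the first step, observe that $\mathcal{I}/(\mathcal{I} \cap \gamma_4) = \Span_k\langle YZY, ZYZ\rangle$, so the length-$3$ part of $(R_1)\alpha$, which modulo $\gamma_4$ reduces to $-[[Y\alpha, Z\alpha], Y\alpha]$ computed from the linear components, must lie in this span; vanishing of the coefficients of the remaining length-$3$ basis vectors forces $d = g = 0$. Expand $(R_1)\alpha$ and $(R_2)\alpha$ to length~$4$ and equate the coefficients of each $\mathcal{F}$-basis element with those of a generic element of $\mathcal{I}$: only a specific subset of basis vectors can appear, and then only in prescribed proportions (e.g.\ the $XYXX$ coefficient must equal the $YZY$ coefficient, the $XZXX$ coefficient must equal the $ZYZ$ coefficient, while coefficients of $XYYY$, $XZZZ$, $XYXY$, $XZXY$, $XYZX$, etc.\ must vanish). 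This should force $b = c = 0$ and a dichotomy $f = 0$ or $e = 0$, yielding the non-swap case (with $h = 0$, $a^3 = ei$) or the swap case (with $i = 0$, $a^3 = fh$); either way the mod-$\gamma_2$ action of $\alpha$ realises a unique element $t \in \widetilde{\mathbf{T}}(k)$.

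Writing $\alpha = t \cdot \beta$, the factor $\beta$ acts trivially on $\mathcal{F}/\gamma_2$, so $\beta(X) = X + U'$, $\beta(Y) = Y + P$, $\beta(Z) = Z + Q$ with $U', P, Q \in \gamma_2$; decompose $P = p_1 XY + p_2 XZ + p_3 YZ + P'$ and $Q = q_1 XY + q_2 XZ + q_3 YZ + Q'$ modulo $\gamma_3$. A short computation using the bracket identities $[YZ, XY] = XYZY - XYYZ$ and $[YZ, XZ] = XZZY - XZYZ$ shows that $(R_1)\beta - R_1 \in \mathcal{I}$ together with $(R_2)\beta - R_2 \in \mathcal{I}$ impose exactly $p_2 = q_1 = 0$ and $p_1 = q_2$, which is precisely the defining shape of an element of $\mathbf{M}(k)$ in~\eqref{equ:elmt-M} (with $\upsilon = p_1 = q_2$, $\sigma = p_3$, $\tau = q_3$). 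Hence $\alpha = t \cdot \beta \in \widetilde{\mathbf{T}}(k) \cdot \mathbf{M}(k) = \mathbf{S}(k)$, proving surjectivity. The main technical obstacle is the bookkeeping in the first stage: the careful expansion of $(R_1)\alpha$ and $(R_2)\alpha$ at length~$4$, the reduction of non-basis commutators such as $XYYX$, $XZYX$, $YZYX$, $YZXZ$ to the Hall basis via~\eqref{equ:PQRS}, and the subsequent solution of the resulting polynomial system in the nine linear parameters; exploiting the $Y \leftrightarrow Z$ symmetry swapping $R_1 \leftrightarrow R_2$ is what keeps the argument manageable.
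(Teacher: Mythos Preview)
Your plan is correct and follows essentially the same route as the paper, with two organisational differences worth noting.  First, you lift to $\mathcal{F}$ and test membership of $(R_i)\alpha$ in $\mathcal{I}$, whereas the paper stays in $\Lambda$ and exploits the relations $(zyz)\alpha \equiv 0$ and $(zxxx)\alpha = (zyz)\alpha$, $(yxxx)\alpha = (yzy)\alpha$; these are the same equations in different guise.  Second, for the linear layer the paper does not attack the full polynomial system at once but instead interleaves the coefficient comparisons with successive normalisations by elements of $(\widetilde{\mathbf{T}}(k))\rho$ (first $a_{11}=1$, then the swap, then $a_{22}=a_{33}=1$), which keeps each intermediate expression small and, crucially, makes it visible that the particular basis coefficients being read off do not involve the unconstrained $\gamma_2$-data $U,V,W$---a point your wholesale expansion would also have to check.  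For the final layer your direct computation that $(R_1)\beta - R_1 \in \mathcal{I}$ forces $p_2=q_1=0$ and $p_1=q_2$ is exactly the paper's argument in different coordinates: the paper first shifts by an element of $(\mathbf{M}(k))\rho$ to reach the normal form~\eqref{equ:congr-for-xyz} and then shows $b_1=b_2=b_3=0$, and one checks that $(b_1,b_2,b_3)=(p_1-q_2,\,p_2,\,q_1)$ under that shift.  In fact $(R_1)\beta$ alone already yields all three constraints, so $(R_2)\beta$ is redundant there.
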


\begin{proof}
  Let $k$ be an extension field of $\Q$ and let $\alpha \in \Aut_k(k
  \otimes_\Z \Lambda)$.  We are to show that $\alpha \in
  ( \mathbf{S}(k))\rho$.  Clearly, $\alpha$ induces an automorphism
  $\overline{\alpha}$ on $(k \otimes_\Z \Lambda)/\gamma_2(k \otimes_\Z
  \Lambda) = \Span_k \langle \overline{x}, \overline{y}, \overline{z}
  \rangle \cong k^3$.  We write $A = (a_{ij}) \in\GL_{3}(k)$ for the
  matrix representing $\overline{\alpha}$ with respect to the
  $k$-basis $\overline{x}, \overline{y}, \overline{z}$.
  
  From $R_2 \equiv -ZYZ$ modulo $\gamma_4(\mathcal{F})$ we deduce
  $(zyz)\alpha \equiv 0$ modulo~$\gamma_4(k \otimes_\Z \Lambda)$.  A
  short computation shows that, modulo $\gamma_4(k \otimes_\Z
  \Lambda)$,
  \begin{align*}
    (zyz) \alpha & \equiv
    (a_{31}x+a_{32}y+a_{33}z)(a_{21}x+a_{22}y+a_{23}z)(a_{31}x
    + a_{32}y+a_{33}z) \\
    & = c_1 xyy + c_2 xzz + c_3 xyx + c_4 xzx + c_5 xyz + c_6 xzy,
  \end{align*}
  where
  \begin{equation} \label{equ:coef}
    \begin{split}
      c_1 = -a_{32}
      \begin{vmatrix}
        a_{21}&a_{22}\\
        a_{31}&a_{32}
      \end{vmatrix}, \quad & c_2 = -a_{33}
      \begin{vmatrix}
        a_{21}&a_{23}\\
        a_{31}&a_{33}
      \end{vmatrix}, \\
      \quad c_3 = -a_{31}
      \begin{vmatrix}
        a_{21}&a_{22}\\
        a_{31}&a_{32}
      \end{vmatrix}, \quad & c_4 = -a_{31}
      \begin{vmatrix}
        a_{21}&a_{23}\\
        a_{31}&a_{33}
      \end{vmatrix},
    \end{split}
  \end{equation}
  and $c_5, c_6$ take more complicated values which do not feature in
  our argument.  Indeed, $yzy \equiv zyz \equiv 0$ modulo $\gamma_4(k
  \otimes_\Z \Lambda)$ and $yzx = xzy - xyz$.  Therefore the
  contributions to $c_1, c_2, c_3, c_4$ are the `obvious ones', as
  described above.

  Since $xyy$, $xzz$, $xyx$, $xzx$, $xyz$, $xzy$ form a $k$-basis for
  $\gamma_3(k \otimes_\Z \Lambda)$ modulo $\gamma_4(k \otimes_\Z
  \Lambda)$, we conclude that the four coefficients listed in
  \eqref{equ:coef} vanish.  On the other hand, since $A$ is
  invertible,
  \[
  \mathrm{rk}
  \begin{pmatrix}
    a_{21}&a_{22}&a_{23}\\
    a_{31}&a_{32}&a_{33}
  \end{pmatrix}
  = 2.
  \]
  We claim that $a_{21} = a_{31} = 0$.  Otherwise $\begin{vmatrix}
    a_{21}&a_{22}\\a_{31}&a_{32} \end{vmatrix}\neq 0$ or
  $\begin{vmatrix} a_{21}&a_{23}\\a_{31}&a_{33} \end{vmatrix} \neq 0$.
  Looking at the coefficients $c_3$, $c_4$ of $xyx$, $xzx$, we
  conclude that $a_{31} = 0$.  Looking at the coefficients $c_1$,
  $c_2$ of $xyy$, $xzz$, this implies that $a_{21} a_{32}^2 = a_{21}
  a_{33}^2 = 0$.  Since $a_{32}\neq 0$ or $a_{33}\neq 0$, this shows
  that $a_{21}=0$.  

  Multiplying $\alpha$ by a suitable element of
  $(\widetilde{\mathbf{T}}(k)) \rho$, we may assume without loss of
  generality that $a_{11}=1$ so that $A$ is of the form
  \[
  A =
  \begin{pmatrix}
    1&a_{12}&a_{13}\\ 0&a_{22}&a_{23}\\
    0&a_{32}&a_{33}
  \end{pmatrix}.
  \]

  Next we derive consequences from the relation $(zxxx) \alpha =
  (zyz)\alpha$, coming from the relation~$R_2$.  Since $\gamma_5(k
  \otimes_\Z \Lambda) = 0$, we have
%   \begin{multline}
  \begin{equation} \label{equ:zxxx} (zxxx)\alpha =
    (a_{32}y+a_{33}z) (x+a_{12}y+a_{13}z) (x+a_{12}y+a_{13}z) (x +
    a_{12}y+a_{13}z),
  \end{equation}
%   \end{multline}
  and similarly
  \begin{equation} \label{equ:zyz}
    \begin{split}
      (zyz)\alpha & = (a_{32}y + a_{33}z) (a_{22}y + a_{23}z)(a_{32}y
      + a_{33}z) \\
      & \qquad + (b_zxy+c_zxz+d_zyz)(a_{22}y+a_{23}z)(a_{32}y+a_{33}z)\\
      & \qquad + (a_{32}y+a_{33}z)(b_yxy+c_yxz+d_yyz)(a_{32}y+a_{33}z)\\
      & \qquad +
      (a_{32}y+a_{33}z)(a_{22}y+a_{23}z)(b_zxy+c_zxz+d_zyz),
    \end{split}
  \end{equation}
  where we are writing
  \begin{align*}
    y\alpha &\equiv a_{22}y + a_{23}z + b_yxy + c_yxz + d_yyz\\
    z\alpha & \equiv a_{32}y + a_{33}z + b_zxy + c_zxz+d_zyz
  \end{align*}
  modulo $\gamma_3(k \otimes_\Z \Lambda)$. 

  Using $yzy = -zyy = yxxx$ and $zyz = -yzz = zxxx$, we can express
  the right-hand side of \eqref{equ:zyz} as a linear combination of
  commutators of length~$4$.  This enables us to apply
  Lemma~\ref{lem:same-weight}.  Comparing, first of all, the
  coefficients of $yxxx = yzy = -zyy$ in~\eqref{equ:zxxx} and
  \eqref{equ:zyz}, we get
  \begin{equation} \label{equ:a32} a_{32} \left( 1 +
      \begin{vmatrix}
        a_{22}&a_{23}\\
        a_{32}&a_{33}
      \end{vmatrix}
    \right) = 0.
  \end{equation}
  Similarly, comparing the coefficients of $zxxx = zyz = -yzz$ in
  \eqref{equ:zxxx} and \eqref{equ:zyz}, we obtain
  \begin{equation} \label{equ:a33} a_{33} \left( 1 -
      \begin{vmatrix}
        a_{22}&a_{23}\\
        a_{32}&a_{33}
      \end{vmatrix}
    \right) =0.
  \end{equation}
  The equations \eqref{equ:a32} and \eqref{equ:a33} yield
  $2a_{32}a_{33}=0$, hence $a_{32}=0$ or $a_{33}=0$.  Because $A$ is
  invertible, $a_{32}=0$ would imply $a_{22} \neq 0$ and $a_{33} \neq
  0$.  Likewise $a_{33}=0$ would imply $a_{23} \neq 0$ and $a_{32}
  \neq 0$.  Left-multiplication by $\left( \begin{smallmatrix}
      1&0&0\\
      0&0&1\\
      0&1&0
    \end{smallmatrix} \right)$ exchanges the second and the third row
  of $A$.  Thus, multiplying $\alpha$ by a suitable element of
  $(\widetilde{\mathbf{T}}(k))\rho$, we may assume without loss of
  generality that $a_{32} = 0$ and $a_{22}, a_{33} \neq 0$.  The
  equation \eqref{equ:a33} now implies that $a_{22}a_{33}=1$, and so
  multiplying $\alpha$ by a suitable element of
  $(\widetilde{\mathbf{T}}(k))\rho$, we may assume that $a_{22} =
  a_{33} = 1$ so that
  \[
  A =
  \begin{pmatrix}
    1&a_{12}&a_{13}\\
    0&1&a_{23}\\
    0&0&1
  \end{pmatrix}.
  \]

  The equations \eqref{equ:zxxx} and \eqref{equ:zyz} simplify to
  \begin{equation} \label{equ:zxxx-simpler} (zxxx)\alpha = z
    (x+a_{12}y+a_{13}z) (x+a_{12}y+a_{13}z) (x + a_{12}y+a_{13}z)
  \end{equation}
  and
  \begin{equation} \label{equ:zyz-simpler}
    \begin{split}
      (zyz)\alpha = z y z &
      + (b_zxy+c_zxz+d_zyz)(y + a_{23}z) z \\
      & + z (b_yxy + c_yxz + d_yyz) z \\
      & + z y (b_zxy+c_zxz).
    \end{split}
  \end{equation}
  Still using $(zxxx)\alpha = (zyz)\alpha$ and applying
  Lemma~\ref{lem:same-weight}, we consider the coefficients for $zxzx
  = zxxz=-xzxz=-xzzx$ in \eqref{equ:zxxx-simpler} and
  \eqref{equ:zyz-simpler}.  Non-zero contributions only come
  from~\eqref{equ:zxxx-simpler}, and we get $2a_{13}=0$, hence
  $a_{13}=0$.

  Next we derive consequences from the relation $(yxxx) \alpha =
  (yzy)\alpha$, coming from the relation~$R_2$.  Since $\gamma_5(k
  \otimes_\Z \Lambda) = 0$, we have
  \begin{equation} \label{equ:yxxx} (yxxx)\alpha = (y+a_{23}z)
    (x+a_{12}y) (x+a_{12}y) (x+a_{12}y),
  \end{equation}
  and similarly
  \begin{equation} \label{equ:yzy}
    \begin{split}
      (yzy)\alpha & = (y + a_{23}z)z(y + a_{23}z) \\
      & \qquad + (b_y xy+c_y xz+d_y yz)z(y+a_{23}z)\\
      & \qquad + (y+a_{23}z)(b_z xy+c_z xz+d_z yz)(y+a_{23}z)\\
      & \qquad + (y+a_{23}z)z(b_y xy+c_y xz+d_y yz),
    \end{split}
  \end{equation}
  where we are writing
  \begin{align*}
    y\alpha & \equiv y + a_{23}z + b_yxy + c_yxz + d_yyz\\
    z\alpha & \equiv z + b_zxy + c_zxz+d_zyz
  \end{align*}
  modulo $\gamma_3(k \otimes_\Z \Lambda)$, as before.

  The right-hand side of~\eqref{equ:yzy} can be expressed as a linear
  combination of commutators of length~$4$, so we may apply
  Lemma~\ref{lem:same-weight}.  Considering the coefficients for $zxxx
  = -yzz$ in \eqref{equ:yxxx} and \eqref{equ:yzy}, we get
  $a_{23}=-a_{23}$ and so $a_{23}=0$.  Next we consider the
  coefficients for $xyxy=xyyx=-yxyx=-yxxy$ in \eqref{equ:yxxx} and
  \eqref{equ:yzy}.  Non-zero contributions only come from
  \eqref{equ:yxxx}, and we get $a_{12}(yxxy+yxyx)=0$, so
  $-2a_{12}xyxy=0$ and hence $a_{12}=0$.  This means that $A$ is the
  identity matrix so that $\alpha$ acts trivially on $(k \otimes_\Z
  \Lambda)/\gamma_2(k \otimes_\Z \Lambda)$.

  It remains to show that $\alpha \in (\mathbf{M}(k))\rho$.
  Shifting $\alpha$ by a suitable element of $(\mathbf{M}(k))\rho$,
  we find $b_1, b_2, b_3 \in k$ such that, modulo $\gamma_3(k
  \otimes_\Z \Lambda)$,
  \begin{equation} \label{equ:congr-for-xyz}
    \begin{split}
      x \alpha & \equiv x \\
      y \alpha & \equiv y + b_1  xy + b_2  xz \\
      z \alpha & \equiv z + b_3 xy.
    \end{split}
  \end{equation}
  We claim that, in fact, $b_1=b_2=b_3=0$.  Indeed, the relation
  $R_1$ gives
%   \begin{multline*}
  \begin{equation*}
    0 = (yxxx-yzy)\alpha \\ = -b_1 xyzy - b_2 xzzy - b_1 (yz)(xy) -
    b_2 (yz)(xz) - b_3 y(xy)y.
  \end{equation*}
%   \end{multline*}
  From the standard relations $(yz)(xy) = xyzy-xyyz$ and $(yz)(xz) =
  xzzy-xzyz$, compare~\eqref{equ:base-change}, we deduce that
  \begin{equation*} 
    0 = b_1 (2xyzy-xyyz) + b_2 (2xzzy-xzyz) - b_3xyyy.
  \end{equation*}
  Using \eqref{equ:xyzy-identity}, we obtain
  \begin{align*}  
    0 & = b_1 xzyy + b_2 \left(2(2xzyz-xyzz) 
      -xzyz \right) - b_3xyyy\\
    & = b_1 xzyy + b_2 (3xzyz-2xyzz) - b_3 xyyy.
  \end{align*}
  Since all four Lie products involved belong to the
  basis~\eqref{equ:L_basis}, we obtain that $b_1=b_2=b_3=0$. It
  follows that $\alpha$ indeed lies in $(\mathbf{M}(k))\rho$.
\end{proof}

%%%%%

\section{Computation of the zeta function} \label{sec:zeta-comp}

Based on Theorem~\ref{thm:Aut(L)} and using the discussion leading up
to Theorem~\ref{thm:dS-Lu}, we proceed to compute the local
pro-isomorphic zeta functions of the Lie lattice~$\Lambda$ defined in
Section~\ref{sec:descr.of.L}.  By Theorem~\ref{thm:Aut(L)}, the
connected component of the identity of the algebraic automorphism
group $\mathbf{G} = \textbf{Aut}(\Lambda)$ decomposes as
\[
\textbf{G}^\circ = \mathbf{N} \rtimes \mathbf{H},
\]
where $\mathbf{N} = \mathbf{M} \rho$ is the unipotent radical and $
\mathbf{H} = \widetilde{\mathbf{T}}^\circ \rho$ is a diagonal group.
We consider $\textbf{G}$ as a subgroup of $\GL_{25}$, via the
$\Z$-basis $(x,y, \ldots,xzyz)$ of~$\Lambda$ in~\eqref{equ:L_basis}.

We employ the set-up of~\cite[Section~2]{dSLu96} as summarised in
Section~\ref{sec:Malcev}.  Fix a prime $p$ and observe that the
condition $\mathbf{G}(\Q_p) = \mathbf{G}(\Z_p) \mathbf{G}^\circ(\Q_p)$
is satisfied.  We write
\[
G = \mathbf{G}^\circ(\Q_p), \qquad N = \mathbf{N}(\Q_p), \qquad H =
 \mathbf{H}(\Q_p).
\]
The natural $G$-module $V = \Span_{\Q_p} \langle x, y, \ldots, xzyz
\rangle \cong \Q_p^{25}$ contains $L = \Z_p \otimes_\Z \Lambda$ as a
sublattice: $L$ consists of the integral elements of~$V$.  Moreover,
$V$ decomposes into a direct sum $V = U_1 \oplus U_2 \oplus U_3 \oplus
U_4$ of $H$-stable subspaces
\begin{align*}
  U_1 & = \langle x,\, y,\, z \rangle && \text{of dimension
    $r_1 = 3$,} \\
  U_2 & = \langle xy,\, xz,\, yz \rangle && \text{of
    dimension $r_2 = 3$,} \\
  U_3 & = \langle xyy,\, xzz,\, xyz,\, xzy,\, xyx,\,xzx\rangle
  && \text{of dimension $r_3 = 6$,} \\
  U_4 & = \langle xyyy,\, xzzz,\, \ldots,\, xzyz \rangle && \text{of
    dimension $r_4 = 13$.}
\end{align*}
% $$
% (a_1, a_2, a_3, b_1, b_2, b_3, c_1, \ldots, c_6, d_1, \ldots, d_{13}),
% $$
% specified as follows; compare~\eqref{equ:L_basis}:
% $$
% \begin{array}{lllllll}
%   a_1=x,&&c_1=xyx,&&d_1=xyyy,&d_7=xyxz,&d_{13}=xzzy.\\
%   a_2=y,&&c_2=xzx,&&d_2=xzzz,&d_8=xzxy,&\\
%   a_3=z,&&c_3=xyz,&&d_3=xyxx,&d_9=xyzx,&\\
%   b_1=xy,&&c_4=xzy,&&d_4=xzxx,&d_{10}=xyzz,&\\
%   b_2=xz,&&c_5=xyy,&&d_5=xyxy,&d_{11}=xzyy,&\\
%   b_3=yz,&&c_6=xzz,&&d_6=xzxz,&d_{12}=xyyz,&\\ 
% \end{array}
% $$
Indeed, the elements of $H$ are precisely the diagonal matrices of the form
\begin{multline} \label{equ:diag} \mathrm{diag}(a, b, c, \,\,\, ab,
  ac, bc, \,\,\, ab^2, ac^2, abc, abc, a^2b, a^2c, \\ ab^3, ac^3,
  a^3b, a^3c, a^2b^2, a^2c^2, a^2bc, a^2bc, a^2bc, abc^2, ab^2c,
  ab^2c, abc^2)
\end{multline}
where $a, b, c \in \Q_p^\times$ such that $a^3 = bc$.
% \begin{multline*}
%   \mathrm{diag}(\lambda, \mu, \nu, \,\, \lambda \mu, \lambda \nu,
%   \mu\nu, \\ \lambda^2\mu, \lambda^2\nu, \lambda\mu\nu, \lambda\mu\nu,
%   \lambda\mu^2, \lambda\nu^2, \,\, \lambda\mu^3, \lambda\nu^3,
%   \lambda^3\mu, \lambda^3\nu, \lambda^2\mu^2, \lambda^2\nu^2, \\
%   \lambda^2\mu\nu, \lambda^2\mu\nu, \lambda^2\mu\nu, \lambda\mu\nu^2,
%   \lambda\mu^2\nu, \lambda\mu^2\nu, \lambda\mu\nu^2,
% \end{multline*}
% where $\lambda, \mu, \nu \in k$ such that $\lambda^3 = \mu\nu$.
Furthermore, for every element $n \in N \subseteq \GL_{25}(\Q_p)$, the
$3 \times 25$ matrix comprising the first three rows of $n$ has the
form
\begin{equation} \label{equ:first-3-rows}
  \begin{pmatrix}
    1 & 0 & 0 & \quad \alpha_1 & \alpha_2 & \alpha_3 & \quad
    \delta_{1,7} & \delta_{1,8} & \cdots & \delta_{1,25} \\
    0 & 1 & 0 & \quad \upsilon & 0 & \sigma & \quad \delta_{2,7} &
    \delta_{2,8} & \cdots
    & \delta_{2,25} \\
    0 & 0 & 1 & \quad 0 & \upsilon & \tau & \quad \delta_{3,7} &
    \delta_{3,8} & \cdots & \delta_{3,25},
  \end{pmatrix},
\end{equation}
with $\alpha_1, \alpha_2, \alpha_3, \upsilon, \sigma, \tau,
\delta_{i,j} \in \Q_p$ for $(i,j) \in \{1,2,3\} \times
\{7,8,\ldots,25\}$.  Observe that the entries in other rows are
uniquely determined by those in the first three rows, due to our
choice of basis.

For $i \in \{1,\ldots,5\}$ we put $V_i = \gamma_i(\Q_p \otimes_\Z
\Lambda)$, so $V_i=U_i\oplus \cdots\oplus U_4$; in particular $V_5
=0$.  For $i \in \{2,\ldots,5\}$ let $N_{i-1}$ be the kernel of
the action of $N$ on~$V/V_i$, so in particular $N_1=N$ and $N_4=1$.
Recall from Section~\ref{sec:Malcev} that the natural representation
of $G/N_{i-1}$ in $\Aut(V/V_i)$ is used to define the integral
elements $(G/N_{i-1})^+$ of $G/N_{i-1}$. In order to apply
Theorem~\ref{thm:dS-Lu}, we need to confirm that the `lifting
condition' \cite[Assumption~2.3]{dSLu96} is satisfied.  As stated in
Section~\ref{sec:Malcev}, this condition asserts that for every $g_0
N_{i-1} \in (G/N_{i-1})^+$ there exists $g \in G^+$ such that $g_0
N_{i-1} = g N_{i-1}$.  Let $g_0 = n_0 h$, with $n_0 \in N$ and
$h\in H$, such that $g_0 N_{i-1} \in (G/N_{i-1})^+$.  Referring to the
description of general elements of $N$ in~\eqref{equ:first-3-rows},
choose $n \in N$ to have the same entries as $n_0$ in the first three
rows, except that those parameters $\delta_{1,j}, \delta_{2,j},
\delta_{3,j}$ of $n$ effecting contributions from~$V_i$ are taken to
be~$0$.  Putting $g = nh$, we conclude that $g_0 N_{i-1} = g N_{i-1}$
and that all entries of $g$ in the first three rows are integral,
hence $(x)g, (y)g, (z)g \in L$.  By Lemma~\ref{lem:enough-xyz}, $g \in
G^+$, hence the lifting condition is indeed satisfied.

Using Theorem~\ref{thm:dS-Lu}, it is now straightforward to calculate
the integral we are interested in.  We first calculate
$\theta_{i-1}(h)$ for $i \in \{2,3,4\}$ and $h \in H$, as defined in
Section~\ref{sec:Malcev}.  Recall that the free parameters in a matrix
$n\in N$ all come from the top three rows.  Moreover,
Lemma~\ref{lem:enough-xyz} shows that to test whether $(n
N_i)\tau_i(h) \in \Mat_{s_i,r_i}(\Z_p)$ for $n N_i \in N_{i-1}/N_i$,
it is necessary and sufficient to check the corresponding entries in
the first three rows; see Section~\ref{sec:Malcev} for an explanation
of how $N_{i-1}/N_i$ is identified with a $\Q_p$-vector space.

For $h = \mathrm{diag} (a, b, c, \ldots ) \in H$, we read off
from~\eqref{equ:diag} that
\begin{equation*}
  \theta_1(h) = \lvert a^3 b^4 c^4 \rvert_p^{-1}\min\{\lvert
  b\rvert_p^{-1}, \lvert c \rvert_p^{-1}\}, \quad 
  \theta_2(h) = \lvert a^{24} b^{15} c^{15} \rvert_p^{-1}, \quad
  \theta_3(h) = \lvert a^{66} b^{45} c^{45} \rvert_p^{-1}
\end{equation*}
and $\det(h) = a^{33} b^{23} c^{23}$.  We define the auxiliary
set
\[
\mathcal{X} = \left\{ (a,b) \in \Z_p^2 \mid a,b \ne 0 \text{
  and } \lvert b \rvert_p \geq \lvert a \rvert_p^3 \right\} \subseteq
(\Q_p^\times)^2.
\]
Using Theorem~\ref{thm:dS-Lu}, this yields
\begin{align*}
  \int_{G^+} & \lvert \det g \rvert_p^s \, d\mu_G(g) = \int_{H^+}
  \lvert \det h \rvert_p^s \prod_{i=1}^3 \theta_i(h) \, d\mu_H(h) \\
  & = \int_{h = \mathrm{diag}(a,b,c,\ldots) \in H^+} \lvert a
  \rvert_p^{-93+33s} \lvert b \rvert_p^{-64+23s} \lvert c
  \rvert_p^{-64+23s} \min\{\lvert b\rvert_p^{-1}, \lvert c
  \rvert_p^{-1}\}
  \, d\mu_H(h) \\
  & = \int_{\substack{(a,b) \in \mathcal{X}, \\ c = a^3b^{-1}}} \,
  \lvert bc \rvert_p^{-31+11s} \lvert b \rvert_p^{-64+23s} \lvert c
  \rvert_p^{-64+23s} \min\{\lvert b\rvert_p^{-1}, \lvert c
  \rvert_p^{-1}\} \, d\mu_{\Q_p^\times}(a) \, d\mu_{\Q_p^\times}(b) \\
  & = \int_{\substack{(a,b) \in \mathcal{X}, \\ c=a^3b^{-1}}} \,
  \lvert bc \rvert_p^{-95+34s} \min\{\lvert b\rvert_p^{-1}, \lvert c
  \rvert_p^{-1}\} \, d\mu_{\Q_p^\times}(a) \, d\mu_{\Q_p^\times}(b) \\
  & = \sum_{\substack{i,j\geq 0 \\ 3 \mid (i+j)}} p^{(i+j)(95-34s)}
  \min\{p^i, p^j\} \, \mu_{\Q_p^\times}
  \!\!\left(p^{(i+j)/3}\Z_p^\times \right) \,
  \mu_{\Q_p^\times} \!\!\left( p^i\Z_p^\times \right)  \\
  & = \sum_{\substack{i,j \geq 0 \\ 3 \mid (i+j)}} \min\{p^i, p^j\}
  X^{i+j} \,\Big\vert_{X=p^{95-34s}} \, ,
\end{align*}
where $\mu_{\Q_p^\times}$ denotes the Haar measure on the
multiplicative group $\Q_p^\times$, normalised so that
$\mu_{\Q_p^\times}(\Z_p^\times) = 1$.  Now
\begin{align*}
  \sum_{\substack{i,j\geq 0\\  3 \mid (i+j)}} & \min\{p^i,
  p^j\} X^{i+j} = \sum_{m,n\geq 0} \min\{p^{3m}, p^{3n}\} X^{3m+3n} \\
  & \quad + \sum_{m,n\geq 0} \min\{p^{3m+1}, p^{3n+2}\} X^{3m+3n+3}
  + \sum_{m,n\geq 0} \min\{p^{3m+2}, p^{3n+1}\} X^{3m+3n+3}\\
  & = \sum_{m,n\geq 0} \min\{p^{3m}, p^{3n}\} X^{3m+3n} + 2
  pX^3 \sum_{m,n\geq 0} \min\{p^{3m}, p^{3n+1}\} X^{3m+3n}.
\end{align*}
We calculate these two pieces separately:
\begin{align*}
  \sum_{m,n\geq 0} \min\{p^{3m}, p^{3n}\} X^{3m+3n} & = 2\sum_{m,k
    \geq 0} p^{3m} X^{6m + 3k} - \sum_{m \geq
    0} p^{3m} X^{6m} \\
  & = \frac{1+X^3}{(1-X^3)(1-p^3X^6)}
\end{align*}
and 
\begin{align*}
  2 p X^3 \sum_{m,n\geq 0} \min\{p^{3m}, p^{3n+1}\} X^{3m+3n} %\\
  & = 2p X^3 \left( \sum_{n \geq m \geq 0} p^{3m} X^{3m + 3n} +
    \sum_{m > n \geq 0} p^{3n+1} X^{3m+3n}
  \right) \\
  & = 2 p X^3 \left( \sum_{m,k \geq 0} p^{3m} X^{6m+3k} +
    \sum_{n \geq 0}  \sum_{k > 0} p^{3n+1} X^{6n+3k} \right) \\
  & = \frac{2pX^3+2p^2X^6}{(1-X^3)(1-p^3X^6)}.
\end{align*}
Summing the two expressions, we arrive at
\begin{align*}
  \int_{G^+} \lvert \det g \rvert_p^s \, d\mu_G(g) & =
  \frac{1+X^3+2pX^3+2p^2X^6}{(1-X^3)(1-p^3X^6)}
  \,\Big\vert_{X=p^{95-34s}} \\
  & = \frac{1 + p^{285-102s} + 2p^{286-102s} + 2p^{572-204s}}{(1 -
    p^{285-102s}) (1 - p^{573-204s})}.
\end{align*}
Applying Propositions~\ref{pro:Malcev-corr} and
\ref{pro:integral-formula}, we obtain Theorem~\ref{thm:main-thm}.

%%%%%
%%%%%

\appendix

\section{A basis for the free nilpotent Lie algebra of class $4$ on
  $3$ generators} \label{sec:appendix_A}

Let $\mathcal{F}$ be the free nilpotent $\Z$-Lie ring of class $c$ on
$n$ generators $X_1, \ldots, X_n$.  We use simple product notation for
the Lie bracket and left-normed notation.  The Hall collection process
yields an ordered $\Z$\nobreakdash-basis for $\mathcal{F}$;
cf.~\cite[Ch.~4]{Re93}.  The elements $X_1, \ldots, X_n$ are basic
elements of weight $1$, and we order them as $X_1 < \ldots < X_n$.
Basic elements of higher weight $w \geq 2$ are defined inductively as
follows.  If $C_1$ and $C_2$ are basic elements of weights $w_1$ and
$w_2$ such that $w = w_1 + w_2$, then $B = C_1 C_2$ is a basic element
of weight $w$ provided that (i) $C_1 > C_2$ and (ii) if $C_1 = D_1
D_2$ for basic elements $D_1, D_2$, then $D_2 \leq C_2$.  If $B$ is a
basic element of weight $w$, then $C < B$ for any basic element $C$ of
weight less than $w$.  Moreover, if $C_1, C_2, C_3, C_4$ are basic
elements such that $B_1 = C_1 C_2$ and $B_2 = C_3 C_4$ are basic
elements of weight $w$, then $B_1 < B_2$ if one of the following
holds: (i) $C_1 < C_3$, (ii) $C_1 = C_3$ and $C_2 < C_4$.  It is well
known that the basic elements of weight up to $c$ provide a $\Z$-basis
for $\mathcal{F}$.  In fact, for each $i \in \{1, \ldots, c\}$, the
basic elements of weight $i$ induce a $\Z$-basis for the abelian Lie
lattice $\gamma_i(\mathcal{F})/\gamma_{i+1}(\mathcal{F})$.

We are interested in the case $c = 4$ and $n = 3$.  Writing $X =
X_1$, $Y = X_2$ and $Z = X_3$, we obtain $32$ basic elements of
weight up to $4$.  They are, in the described order,
\begin{equation} \label{equ:hall-basis}
  \begin{split}
    & X,\, Y,\, Z, \quad
    YX,\, ZX,\, ZY, \\
    & YXX,\, YXY,\, YXZ,\, ZXX,\, ZXY,\, ZXZ,\,
    ZYY,\, ZYZ, \\
    & (ZX)(YX),\, (ZY)(YX),\, (ZY)(ZX), \\
    & YXXX,\, YXXY,\, \underline{YXXZ},\, YXYY,\, \underline{YXYZ},\, YXZZ, \\
    & ZXXX,\, ZXXY,\, ZXXZ,\, ZXYY,\, \underline{ZXYZ},\, ZXZZ, \\
    & ZYYY,\, ZYYZ,\, ZYZZ.
  \end{split}
\end{equation}
For our computations it is slightly easier to work with left-normed
products as basis elements.  Using the relations
\begin{equation} \label{equ:base-change}
  \begin{split}
    (ZX)(YX) & = \underline{YXXZ} - YXZX \\
    (ZY)(YX) & = XYZY - \underline{XYYZ} \\
    (ZY)(ZX) & = XZZY - \underline{XZYZ},
  \end{split}
\end{equation}
where we have underlined terms already occurring up to a sign change
in~\eqref{equ:hall-basis}, one sees that it is permissible to replace
the three basis elements which are not left-normed, i.e., $(ZX)(YX)$,
$(ZY)(YX)$ and $(ZY)(ZX)$, by $XYZX$, $XYZY$ and $XZZY$.  Reordering
the resulting basis, we arrive at the basis displayed
in~\eqref{equ:F_basis}.

The relations~\eqref{equ:base-change} are obtained as follows.  For
elements $T,U,V,W$ of any Lie ring, the Jacobi identity -- applied to
$T, U, VW$ -- yields
\begin{equation} \label{equ:TUVW}
  \begin{split}
    (TU)(VW) & = (T(VW))U + T(U(VW)) \\
    & = VWUT - VWTU.
  \end{split}
\end{equation}
Suitable substitutions for $T,U,V,W$ now provide the relations for
$X,Y,Z$.

Applying~\eqref{equ:TUVW} and the Jacobi identity, we derive another
useful identity for elements $P,Q,R,S$ of any Lie ring, namely
\begin{equation} \label{equ:PQRS-appendix}
  \begin{split}
    PQRS & = PQSR + (PQ)(RS) \\
    & = (PQS)R + RSQP - RSPQ   \\
    & = ((PS)Q + P(QS))R + RSQP - RSPQ  \\
    & = PSQR + SQPR + RSQP + SRPQ.
  \end{split}
\end{equation}
This is the identity~\eqref{equ:PQRS} stated earlier in the paper.

%%%%% 

\end{document}